\theoremstyle{plain}
\newtheorem{thm}{Theorem}[section]
\newtheorem{theorem}[thm]{Theorem}
\newtheorem{lemma}[thm]{Lemma}
\newtheorem{example}[thm]{Example}
\newtheorem{definition}[thm]{Definition}
\newtheorem{remark}[thm]{Remark}
\newcommand{\Gr}{\mathrm{Gr}}
\newcommand{\C}{\mathbb{C}}
\newcommand{\HHH}{\mathrm{HHH}}
\newcommand{\CF}{\mathcal{F}}
\newcommand{\sort}{\mathrm{sort}}
\tikzset
{%https://tex.stackexchange.com/questions/620408/how-can-i-draw-2-tikz-rectangular-grids-side-by-side
  pics/matrix/.style n args={5}{% rows, columns, text above, text below, text left (or symbol)
    code={%
      \begin{scope}[y=-1cm,scale=0.5]
        \draw    (0,0) grid (#2,#1);
        \node at (0.5*#2,-0.5)   {#3};
        \node at (0.5*#2,#1+0.5) {#4};
        \node at (-0.5,0.5*#1)   {#5};
      \end{scope}     
    }},
}
\title{Splicing positroid varieties}
\author{Eugene Gorsky}
\address{Department of Mathematics\\ University of California at Davis\\ One Shields Avenue, Davis CA 95616}
\email{egorskiy@math.ucdavis.edu}
\author{Tonie Scroggin}
\address{Department of Mathematics\\ University of California at Davis\\ One Shields Avenue, Davis CA 95616}
\email{tmscroggin@ucdavis.edu}
\date{}
\begin{document}

\begin{abstract}
We construct an explicit isomorphism between an open subset in the open positroid variety  $\Pi_{k,n}^{\circ}$ in the Grassmannian $\Gr(k,n)$ and the product of two open positroid varieties $\Pi_{k,n-a+1}^{\circ}\times \Pi_{k,a+k-1}^{\circ}$. In the respective cluster structures, this isomorphism is given by freezing a certain subset of cluster variables and applying a cluster quasi-equivalence.
\end{abstract}

\maketitle

\section{Introduction}

In this paper, we study certain maps between the open positroid varieties. Recall that the Grassmannian $\Gr(k,n)$ can be realized as the space of  $k\times n$ matrices of rank $k$, modulo row operations. We will denote the columns of such a matrix $V$ by $(v_1,\ldots,v_n)$, and extend them periodically by $v_{i+n}=v_i$. The {\bf open positroid variety} $\Pi_{k,n}^{\circ}$ is then defined \cite{KLS} by the condition that the $k$ cyclically consecutive vectors $v_i,v_{i+1},\ldots,v_{i+k-1}$ are linearly independent for all $i$. Equivalently, the minors $\Delta_{i,i+1,\ldots,i+k-1}(V)$ do not vanish. In the last decades, open positroid varieties (and more general positroid strata in $\Gr(k,n)$) have attracted a lot of attention in combinatorics, algebra, geometry and physics. We focus on two results which motivated this paper.

First, by the work of Scott \cite{FWZ,S} open positroid varieties carry a {\bf cluster structure} which we review in Section \ref{sec: cluster Scott}. In particular, there is a certain collection of minors of $V$ providing cluster coordinates on $\Pi_{k,n}^{\circ}$. Many more cluster coordinates are obtained from these by mutations. 

Second, by the work of Galashin and Lam \cite{GL} the torus-equivariant homology of $\Pi_{k,n}^{\circ}$ is closely related to the {\bf Khovanov-Rozansky homology} $\HHH$ of the torus link $T(k,n-k)$ . In fact, $\Pi_{k,n}^{\circ}$  (up to a certain algebraic torus) is isomorphic to a braid variety in the sense of \cite{CGGS1,CGGS2}, and by \cite{Trinh} the torus-equivariant homology of any braid variety is related  to the Khovanov-Rozansky homology of the corresponding link. Furthermore, by \cite{CGGLSS,GLSBS} any braid variety admits a cluster structure. 

The multiplication of braids $T(k,s)\cdot T(k,t)\to T(k,s+t)$ induces a natural map in Khovanov-Rozansky homology
$$
\HHH(T(k,s))\otimes \HHH(T(k,t))\to \HHH(T(k,s+t)).
$$
This suggests that there might be a map of open positroid varieties:
\begin{equation}
\label{eq: s and t}
\Pi_{k,k+s}^{\circ}\times \Pi_{k,k+t}^{\circ}\to \Pi_{k,k+s+t}^{\circ}.
\end{equation}
In this paper, we construct such a map with particularly nice properties and describe its image.

Given a $k\times n$ matrix $V=(v_1,\ldots,v_{n})$ as above and $1\le a\le n-k$, we consider a submatrix $V_1=(v_a,\ldots,v_n)$. This submatrix belongs to $\Pi_{k,n-a+1}^{\circ}$ if and only if certain additional minors of $V$ do not vanish, which defines an open subset $U_a\subset \Pi_{k,n}^{\circ}$. On this open subset, we define the second matrix 
$$
V_2=(v_1,\ldots, v_{a},u_1,\ldots,u_{k-2},v_n)
$$
where $u_i$ are certain vectors \eqref{eq: def u} spanning the one-dimensional intersection (see Lemma \ref{lem: intersection}):
$$
\langle v_a,v_{a+1},\ldots,v_{a+i}\rangle\cap \langle v_{n-k+i+1},\ldots,v_{n-1},v_n\rangle=\langle u_i\rangle.
$$
Diagrammatically, we can visualize the rule for computing $u_i$ in Figure \ref{fig: 5 10 intro}. The diagram represents a specific  braid word for the half-twist braid, thought of as a subword of a longer torus braid (see Figure \ref{fig:braid cut 3,8}). The regions in the diagram are labeled by certain subspaces in $\C^k$ such that every vertical cross-section forms a complete flag and two neighboring flags have a prescribed relative position. For example, in Figure \ref{fig: 5 10 intro} we get  complete flags
$$
0\subset \langle v_3\rangle \subset \langle v_3,v_4\rangle\subset  \langle v_3,v_4,v_5\rangle
\subset  \langle v_3,v_4,v_5,v_6\rangle\subset \C^5
$$
on the left and 
$$
0\subset \langle v_{10}\rangle \subset \langle v_9,v_{10}\rangle\subset  \langle v_8,v_9,v_{10}\rangle
\subset  \langle v_7,v_8,v_9,v_{10}\rangle\subset \C^5
$$
on the right which determine all intermediate subspaces. See Section \ref{sec: braids} for more details. Curiously, this figure resembles the process of mRNA splicing \cite[Figure 2]{splicing}.

\begin{figure}[ht!]
    \centering
    \tikzset{every picture/.style={line width=0.75pt}} %set default line width to 0.75pt        

\begin{tikzpicture}[x=0.75pt,y=0.75pt,yscale=-1,xscale=1]
%uncomment if require: \path (0,300); %set diagram left start at 0, and has height of 300

%Straight Lines [id:da43081404952437174] 
\draw    (120.13,170.42) -- (180,170.42) ;
%Straight Lines [id:da860114993418754] 
\draw    (120.13,140.42) -- (210.25,139.92) ;
%Straight Lines [id:da440518013726922] 
\draw    (120.13,110.17) -- (240.25,110.42) ;
%Straight Lines [id:da521301808874939] 
\draw    (120.13,200.42) -- (150,200.17) ;
%Straight Lines [id:da7314439986107959] 
\draw    (119.88,230.17) -- (150,230.17) ;
%Straight Lines [id:da18667523098749994] 
\draw    (150,200.17) -- (170.25,229.92) ;
%Straight Lines [id:da8363625573031155] 
\draw    (150,230.17) -- (158.75,217.17) ;
%Straight Lines [id:da7814998766989167] 
\draw    (161.75,213.17) -- (169.75,200.42) ;
%Straight Lines [id:da8305055603193974] 
\draw    (180,170.42) -- (200.25,200.17) ;
%Straight Lines [id:da6774550780463704] 
\draw    (180,200.42) -- (188.75,187.42) ;
%Straight Lines [id:da6162696947327795] 
\draw    (191.75,183.42) -- (199.75,170.67) ;
%Straight Lines [id:da44884704622762994] 
\draw    (210.25,139.92) -- (230.5,169.67) ;
%Straight Lines [id:da7507289769076497] 
\draw    (210,170.67) -- (219,156.92) ;
%Straight Lines [id:da5750670591362803] 
\draw    (222,152.92) -- (230,140.17) ;
%Straight Lines [id:da3420912754354959] 
\draw    (240.25,110.42) -- (260.5,140.17) ;
%Straight Lines [id:da6303815720809545] 
\draw    (240.25,140.42) -- (249,127.42) ;
%Straight Lines [id:da00664484093595008] 
\draw    (252,123.42) -- (260,110.67) ;
%Straight Lines [id:da12293138358217881] 
\draw    (270.25,200.17) -- (290.5,229.92) ;
%Straight Lines [id:da3531186658767105] 
\draw    (270.25,230.17) -- (279,217.17) ;
%Straight Lines [id:da21541171624363797] 
\draw    (282,213.17) -- (290,200.42) ;
%Straight Lines [id:da8592888424023577] 
\draw    (300.25,170.17) -- (320.5,199.92) ;
%Straight Lines [id:da9538275072722171] 
\draw    (300.25,200.42) -- (309,187.17) ;
%Straight Lines [id:da1188496582775056] 
\draw    (312,183.17) -- (320,170.42) ;
%Straight Lines [id:da7470305306011495] 
\draw    (330.25,140.42) -- (350.5,170.17) ;
%Straight Lines [id:da4724737792643683] 
\draw    (330.25,170.42) -- (339,157.42) ;
%Straight Lines [id:da820264230580249] 
\draw    (342,153.42) -- (350,140.67) ;
%Straight Lines [id:da09561466808043906] 
\draw    (360.5,199.92) -- (380.75,229.67) ;
%Straight Lines [id:da41865092057924946] 
\draw    (360.5,229.92) -- (369.25,216.92) ;
%Straight Lines [id:da8009826831322315] 
\draw    (372.25,212.92) -- (380.25,200.17) ;
%Straight Lines [id:da10675803389716676] 
\draw    (390.25,170.42) -- (410.5,200.17) ;
%Straight Lines [id:da2490164329830915] 
\draw    (390.25,200.42) -- (399,187.42) ;
%Straight Lines [id:da4756099419509221] 
\draw    (402,183.42) -- (410,170.67) ;
%Straight Lines [id:da7775015032357266] 
\draw    (419.75,200.17) -- (440,229.92) ;
%Straight Lines [id:da5359178484754723] 
\draw    (419.75,230.17) -- (428.5,217.17) ;
%Straight Lines [id:da7013910998171433] 
\draw    (431.5,213.17) -- (439.5,200.42) ;
%Straight Lines [id:da9341299252924127] 
\draw    (169.75,200.42) -- (180,200.42) ;
%Straight Lines [id:da7271773704657327] 
\draw    (199.75,170.67) -- (210,170.67) ;
%Straight Lines [id:da2577040327200544] 
\draw    (230,140.17) -- (240.25,140.42) ;
%Straight Lines [id:da2622275519447739] 
\draw    (290,200.42) -- (300.25,200.42) ;
%Straight Lines [id:da15448483353228415] 
\draw    (320,170.42) -- (330.25,170.42) ;
%Straight Lines [id:da23631163115163267] 
\draw    (380.25,200.17) -- (390.5,200.17) ;
%Straight Lines [id:da8665178576273245] 
\draw    (410.5,200.17) -- (419.75,200.17) ;
%Straight Lines [id:da41608868579923364] 
\draw    (170.25,229.92) -- (270.25,230.17) ;
%Straight Lines [id:da450686844124742] 
\draw    (200.25,200.17) -- (270.25,200.17) ;
%Straight Lines [id:da41636902114130936] 
\draw    (230.5,169.67) -- (300.25,170.17) ;
%Straight Lines [id:da11959811607656179] 
\draw    (260.5,140.17) -- (330.25,140.42) ;
%Straight Lines [id:da057556342319579956] 
\draw    (290.5,229.92) -- (360.5,229.92) ;
%Straight Lines [id:da48805180011896043] 
\draw    (320.5,199.92) -- (360.5,199.92) ;
%Straight Lines [id:da42242196465294546] 
\draw    (380.75,229.67) -- (419.75,230.17) ;
%Straight Lines [id:da7302288135841646] 
\draw    (350.5,170.17) -- (390.25,170.42) ;
%Straight Lines [id:da6440989444748026] 
\draw    (440,229.92) -- (470.25,229.92) ;
%Straight Lines [id:da9620238841063735] 
\draw    (439.5,200.42) -- (469.75,200.42) ;
%Straight Lines [id:da38502237710150444] 
\draw    (410,170.67) -- (470.25,170.42) ;
%Straight Lines [id:da7841016819134863] 
\draw    (350,140.67) -- (470,140.67) ;
%Straight Lines [id:da7657279874013829] 
\draw    (260,110.67) -- (470,110.42) ;

% Text Node
\draw (129.17,209.01) node [anchor=north west][inner sep=0.75pt]  [font=\footnotesize] [align=left] {$\displaystyle v_{3}$};
% Text Node
\draw (444.83,209.34) node [anchor=north west][inner sep=0.75pt]  [font=\footnotesize] [align=left] {$\displaystyle v_{10}$};
% Text Node
\draw (394.5,209.34) node [anchor=north west][inner sep=0.75pt]  [font=\footnotesize] [align=left] {$\displaystyle u_{3}$};
% Text Node
\draw (318.83,209.34) node [anchor=north west][inner sep=0.75pt]  [font=\footnotesize] [align=left] {$\displaystyle u_{2}$};
% Text Node
\draw (208.83,209.01) node [anchor=north west][inner sep=0.75pt]  [font=\footnotesize] [align=left] {$\displaystyle u_{1}$};
% Text Node
\draw (122.83,179.34) node [anchor=north west][inner sep=0.75pt]  [font=\footnotesize] [align=left] {$\displaystyle \langle v_{3} ,v_{4} \rangle $};
% Text Node
\draw (418.5,177.67) node [anchor=north west][inner sep=0.75pt]  [font=\footnotesize] [align=left] {$\displaystyle \langle v_{9} ,v_{10} \rangle $};
% Text Node
\draw (123.5,148.01) node [anchor=north west][inner sep=0.75pt]  [font=\footnotesize] [align=left] {$\displaystyle \langle v_{3} ,v_{4} ,v_{5} \rangle $};
% Text Node
\draw (122.83,119.01) node [anchor=north west][inner sep=0.75pt]  [font=\footnotesize] [align=left] {$\displaystyle \langle v_{3} ,v_{4} ,v_{5} ,v_{6} \rangle $};
% Text Node
\draw (400.83,147.34) node [anchor=north west][inner sep=0.75pt]  [font=\footnotesize] [align=left] {$\displaystyle \langle v_{8} ,v_{9} ,v_{10} \rangle $};
% Text Node
\draw (381.5,117.34) node [anchor=north west][inner sep=0.75pt]  [font=\footnotesize] [align=left] {$\displaystyle \langle v_{7} ,v_{8} ,v_{9} ,v_{10} \rangle $};
% Text Node
\draw (223,178.67) node [anchor=north west][inner sep=0.75pt]  [font=\footnotesize] [align=left] {$\displaystyle \langle u_{1} ,u_{2} \rangle $};
% Text Node
\draw (337,178.67) node [anchor=north west][inner sep=0.75pt]  [font=\footnotesize] [align=left] {$\displaystyle \langle u_{2} ,u_{3} \rangle $};
% Text Node
\draw (249,147.67) node [anchor=north west][inner sep=0.75pt]  [font=\footnotesize] [align=left] {$\displaystyle \langle u_{1} ,u_{2} ,u_{3} \rangle $};

\end{tikzpicture}
    \caption{Braid diagram and flags for $k=5,n=10$ and $a=3$. Here $\langle u_1\rangle =\langle v_3,v_4\rangle \cap \langle v_7,v_8,v_9,v_{10}\rangle$,
    $\langle u_2\rangle =\langle v_3,v_4,v_5\rangle \cap \langle v_8,v_9,v_{10}\rangle$ and 
    $\langle u_3\rangle =\langle v_3,v_4,v_5,v_6\rangle \cap \langle v_9,v_{10}\rangle$.}
    \label{fig: 5 10 intro}
\end{figure}
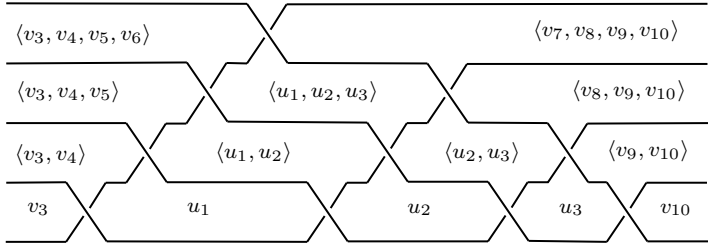

We can now state our main result.

\begin{theorem}
\label{thm: intro main}
Choose an integer $a$ such that $1\le a\le n-k$.

a) The map $\Phi_a:V\mapsto (V_1,V_2)$ is well defined and yields an isomorphism $$U_a\simeq \Pi_{k,n-a+1}^{\circ}\times \Pi_{k,a+k-1}^{\circ}.$$

b) The subset $U_a$ admits a cluster structure obtained by freezing certain explicit cluster variables in the rectangle seed for $\Pi_{k,n}^{\circ}$.

c) The map $\Phi_a$ is a cluster quasi-isomorphism (in the sense of \cite{F}) between $U_a$ and $\Pi_{k,n-a+1}^{\circ}\times \Pi_{k,a+k-1}^{\circ}$.
\end{theorem}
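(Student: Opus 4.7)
My plan is to prove the three parts in sequence, with the geometric isomorphism in (a) giving us a concrete target for the cluster matching in (c).

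For part (a), the first step is to verify that the intersections defining the $u_i$ are one-dimensional, so that $V_2$ is well-defined up to the $GL_k$-gauge on $V$; this is precisely Lemma \ref{lem: intersection} applied on $U_a$. Membership $V_1 \in \Pi^{\circ}_{k,n-a+1}$ is immediate from the definition of $U_a$, while $V_2 \in \Pi^{\circ}_{k,a+k-1}$ reduces to checking that every window of $k$ cyclically consecutive columns among $(v_1,\ldots,v_a,u_1,\ldots,u_{k-2},v_n)$ is linearly independent. I would obtain this from the flag picture in Figure \ref{fig: 5 10 intro}: each such window corresponds to a vertical cross-section of the braid diagram, and the prescribed relative positions of adjacent flags force the required transversalities. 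To upgrade $\Phi_a$ to an isomorphism, I would exhibit an explicit inverse: given $(V_1,V_2)$, pick a representative of $V_1$ which determines the vectors $v_a,\ldots,v_n \in \C^k$; apply the intersection recipe to obtain specific representatives $u_1,\ldots,u_{k-2}$; then rescale the representative of $V_2$ by the unique $g \in GL_k$ sending its columns $a, a+1, \ldots, a+k-1$ to $(v_a,u_1,\ldots,u_{k-2},v_n)$ (which is possible, as these $k$ vectors form a basis thanks to $V_2 \in \Pi^{\circ}_{k,a+k-1}$). The first $a-1$ columns of $g \cdot V_2$ then recover $v_1,\ldots,v_{a-1}$, assembling into $V \in U_a$, and verifying that this is inverse to $\Phi_a$ is a routine check.

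For part (b), the complement $\Pi^{\circ}_{k,n}\setminus U_a$ is cut out by the \emph{wrapping} cyclic minors of $V_1$, i.e.\ minors of the form $\Delta_{j,j+1,\ldots,n,a,a+1,\ldots}(V)$ crossing the cyclic seam of $V_1$ but not of $V$. I would identify this finite collection of minors as specific cluster variables in Scott's rectangle seed on $\Pi^{\circ}_{k,n}$, and then invoke the standard principle that inverting and freezing such a subset of seed variables produces a cluster structure on the locus where they are non-vanishing, which is precisely $U_a$.

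For part (c), the plan is to construct an explicit finite mutation sequence, starting from the rectangle seed on $U_a$ with the variables from part (b) frozen, that terminates in a seed whose quiver decomposes (away from the frozen vertices) as the disjoint union of the rectangle quivers for $\Pi^{\circ}_{k,n-a+1}$ and $\Pi^{\circ}_{k,a+k-1}$. I would then match the cluster variables of this target seed with the pullbacks under $\Phi_a$ of the rectangle cluster variables on the two factors, agreeing up to Laurent monomials in the frozens in the sense of \cite{F}. This is where I expect the main obstacle: the mutation sequence is essentially forced upon us by the geometric recipe for the $u_i$'s, each mutation corresponding to one elementary braid move of Figure \ref{fig: 5 10 intro}, and the identification of mutated variables with explicit minors of $V_1$ or $V_2$ will require a careful induction on the local flag moves, with the quasi-equivalence corrections tracked by the frozen wrapping minors from part (b).
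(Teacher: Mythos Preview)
Your approach to parts (a) and (b) matches the paper's. For (a), the paper verifies $V_2\in\Pi^{\circ}_{k,a+k-1}$ via explicit wedge-product identities (Lemma~\ref{lem: u wedge}) rather than your flag-transversality argument, but these are two presentations of the same computation, and your construction of the inverse is exactly the one in the paper. For (b), your ``wrapping minors of $V_1$'' are precisely the $\Delta_{I(a,i)}$ for $1\le i\le k-1$, which form the column $b=a$ of the rectangle seed; freezing them gives the cluster structure on $U_a$.

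Your plan for (c), however, differs from the paper's and introduces a detour that turns out to be unnecessary. The paper performs \emph{no mutations at all}. The key observation is Lemma~\ref{lem: new minors}: every rectangle-seed variable $\Delta_{I(b,i)}(V)$ with $b\ge a$ equals the corresponding rectangle variable of $V_1$, while every $\Delta_{I(b,i)}(V)$ with $b<a$ equals the corresponding rectangle variable of $V_2$ multiplied by the frozen ratio $\Delta_{I(a,i)}/\Delta_{I(a,k-1)}$. Thus the initial rectangle seed on $U_a$ is \emph{already} quasi-equivalent to the product of the two initial rectangle seeds; what remains is a short case-by-case check (interior, top boundary, left boundary, corner) that the exchange ratios $\widehat{y}$ agree. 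Your proposed mutation sequence ``corresponding to elementary braid moves'' is superfluous: freezing the column $b=a$ already disconnects the mutable part of $Q_{k,n}$ into the two rectangle subquivers, so the sequence you seek has length zero. The induction on braid moves you anticipate as the main obstacle never materializes; the missing ingredient in your outline is the direct Pl\"ucker comparison of Lemma~\ref{lem: new minors}, which is a short wedge-product computation from Lemma~\ref{lem: u wedge}.
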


\begin{remark}
For $a=1$ we have $V_1=V\in \Pi^{\circ}_{k,n}$ and $V_2\in \Pi_{k,k}^{\circ}=\mathrm{pt}$, so $\Phi_1$ is the identity map. Most of the time, we will assume $2\le a\le n-k$ so that $\Phi_1$ is nontrivial.
\end{remark}

Note that the inverse map $\Phi_a^{-1}:\Pi_{k,n-a+1}^{\circ}\times \Pi_{k,a+k-1}^{\circ}\to \Pi_{k,n}^{\circ}$ has image $U_a$ and fulfills our expectation \eqref{eq: s and t} with $s=n-a+1-k$ and $t=a-1$. We give an interpretation of $\Phi_a$ in terms of braid varieties in Section \ref{sec: braids}.
In a future work, we plan to use the explicit description of the map $\Phi_a$ to study a more explicit relation between the homology of $\Pi_{k,n}^{\circ}$ and link homology. For example, in the case $k=2$ there are no vectors $u_i$, and the map $\Phi_a$ has the form
$$
(v_1,\ldots,v_n)\mapsto  (v_a,\ldots,v_n),(v_1,\ldots,v_a,v_n) .
$$
Such a map and its generalizations were studied in detail by the second author in \cite{Scroggin}, where the corresponding maps on de Rham cohomology were described explicitly.

Finally, we would like to notice a striking similarity between our map and the so-called BCFW recursion in theory of scattering amplitudes. In \cite{ELPTSBW} this recursion was reinterpreted as a rational map (see \cite[Definition 4.2]{ELPTSBW}) $\Psi:\Pi_{4,n}\rightarrow \Pi_{4,N_L}\times \Pi_{4,N_R}$ %\textcolor{red}{Should this map be $\Pi_{4,n}\to\Pi_{4,N_R}\times\Pi_{4,N_L}$?} 
which is closely related but slightly different from our $\Phi_a$:
$$
\Psi(v_1,\ldots,v_n)\mapsto (v_1,\ldots,v_{a},\widetilde{u_1},v_n),(v_{a+1},\ldots,v_{n-2},\widetilde{u_2},\widetilde{u_3}).
$$
Here $\widetilde{u_i}$ are defined similarly to our $u_i$, but are distributed between both matrices $V_1,V_2$. As a result, the effect of $\Psi$ on the cluster structure is significantly more complicated, see \cite[Theorem 4.7]{ELPTSBW}.
It would be interesting to find a knot-theoretic interpretation of $\Psi$ and   generalizations of $\Psi$ for $k\neq 4$.

The paper is organized as follows. In Section \ref{sec: background} we recall some background on open positroids and cluster structures. In Section \ref{sec: construction} we describe the construction and the main properties of the map $\Phi_a$, and prove Theorem \ref{thm: intro main}(a). In Section \ref{sec: construction} we consider the cluster structures on all varieties in question and prove Theorem \ref{thm: intro main}(b,c). Finally, in Section \ref{sec: braids} we relate our constructions to braid varieties.

\section*{Acknowledgments}

We are grateful to Roger Casals, Thomas Lam, Melissa Sherman-Bennett, Jos\'e Simental and Lauren Williams for inspiring discussions. This work was partially supported by the NSF grant DMS-2302305.

\section{Background}
\label{sec: background}

\subsection{Grassmannians and open positroids}

The Grassmannian $\Gr(k,n)$ is the space of all $k$-dimensional subspaces of $\C^n$, presented as the row span of a $k\times n$ matrix $V$ of maximal rank. Let $v_1,\ldots,v_n$ be the columns of $V$ where $v_i$ are $k$-dimensional vectors. Given an ordered subset $I\in\binom{[n]}{k}$, the \textit{Pl\"ucker coordinate} $\Delta_I(V)$ is the minor of $k\times k$ submatrix of $V$ in column set $I$. We will sometimes consider the exterior algebra $\wedge^{\bullet}\C^k$, and identify $\Delta_I(V)$ with $v_{i_1}\wedge\cdots \wedge v_{i_k}\in \wedge^k(\C^k)\simeq \C$ for $I=\{i_1,\ldots,i_k\}$.

The row operations have the effect of changing $V$ to $AV$ for an invertible $k\times k$ matrix $A$. This implies $v_i\mapsto Av_i$ and $\Delta_I\mapsto \det(A)\Delta_I$ for all $I$. In particular, $\Delta_I$ can be considered as projective coordinates on $\Gr(k,n)$, or as affine coordinates on the affine cone $\widehat{\Gr}(k,n)$.

Knutson-Lam-Speyer \cite{KLS} constructed the stratification $$\Gr(k,n)=\displaystyle\bigsqcup_{f\in\textbf{B}_{k,n}}\Pi_f^{\circ}$$ where $\Pi_f^{\circ}$ are open positroid varieties indexed by a finite set $\textbf{B}_{k,n}$ of bounded  affine permutations, see \cite[Section 4.1]{GL} for more information. This positroid stratification contains a unique open stratum, the top dimensional positroid variety, defined such that cyclically consecutive Pl\"ucker coordinates are non-vanishing, i.e.,
$$
\Pi_{k,n}^{\circ}:=\left\{V\in\Gr(k,n):\Delta_{1,2,\dots,k}(V),\Delta_{2,3,\dots,k+1}(V),\dots,\Delta_{n,1,2,\dots,k-1}(V)\ne0\right\}.$$

%TODO: notations, minors, positroids and all that \textcolor{red}{Done?}

\subsection{Cluster algebras}

Cluster algebras are an interesting class of commutative rings which are defined by a seed $s$ consisting of a quiver, or exchange matrix, and cluster variables, which are a finite collection of algebraically independent elements of the algebra. This seed along with a concept of mutation generates a subring of a field $\mathcal{F}$. For more details on cluster algebras, see \cite{W}.

A cluster variety is an affine algebraic variety $X$ defined by a collection of open charts $U\simeq(\C^*)^d$ where each chart $U$ is parametrized by cluster coordinates $A_1,\ldots,A_d$ which are invertible on $U$ and extend to regular functions on $X$. If the coordinate extends to a non-vanishing regular function on $X$ then we call it frozen, otherwise we call the coordinate  mutable. 

For each chart we assign a skew-symmetric integer matrix $\varepsilon_{ij}$ called the exchange matrix to a quiver $Q$ defined by 
$$
(\varepsilon_{ij})=\begin{cases}
    a &\text{if there are $a$ arrows from vertex $i$ to vertex $j$;}\\
    -a &\text{if there are $a$ arrows from vertex $j$ to vertex $i$;}\\
    0 &\text{otherwise}
\end{cases}$$
For each chart $U$ and each mutable variable $A_k$, there is another chart $U'$ with cluster coordinates $A_1,\ldots,A'_k,\ldots,A_d$ and a skew-symmetric matrix $\varepsilon'_{ij}$ related by mutation $\mu_k$, where the mutation is defined by 
\begin{equation}
\label{eq: mutation}
A_k'A_k=\left(\displaystyle\prod_{\varepsilon_{ki\geq0}}A_i^{\varepsilon_{ki}}+\prod_{\varepsilon_{ki\leq0}}A^{-\varepsilon_{ki}}_i\right)
\end{equation}
If $i\ne k$ then the cluster variables $A_i$ remain unchanged.

When performing a mutation, we modify the quiver using the following rules:
\begin{enumerate}
    \item If there is a path of the vertices $i\rightarrow k\rightarrow j$, then we add an arrow from $i$ to $j$.
    \item Any arrows incident to $k$ change orientation.
    \item Remove a maximal disjoint collection of 2-cycles produced in Steps (1) and (2).
\end{enumerate}

We relate any two charts in the cluster algebra by a sequence of mutations $\underline{\mu}$, and we note that $\mu_k$ is an involution. Given these conditions the ring of functions on $X$ is generated by all cluster variables in all charts.

We define exchange ratio $\widehat{y_i}$ as the ratio of two terms in \eqref{eq: mutation}:
$$
\widehat{y_i}=\frac{\prod_{\varepsilon_{ki\geq0}}A_i^{\varepsilon_{ki}}}{\prod_{\varepsilon_{ki\leq0}}A^{-\varepsilon_{ki}}_i}.
$$
We will need the notion of exchange ratios in the definition of quasi-equivalences.
Let $X$ be a rational affine algebraic variety with algebra of regular functions $\C[X]$ and field of rational functions $\C(X)$. 
\begin{definition}\cite{F, FSB}
\label{def: quasi}
    Let $\Sigma$ and $\Sigma_0$ be seeds of rank $r$ in $\C(X)$. Let $Q, A_i, \hat{y_i}$ denote the  quiver, cluster variables and exchange ratios in $\Sigma$ and use primes to denote these quantities in $\Sigma_0$. We assume that $A_{r+1}, \ldots , A_d$ are frozen.
    Then $\Sigma$ and $\Sigma_0$ are quasi-equivalent, denoted $\Sigma\sim\Sigma_0$, if the following hold:
    \begin{itemize}
        \item The groups $\mathbf{P}, \mathbf{P}_0 \subset \C[X]$ of Laurent monomials in frozen variables coincide. That is, each frozen variable $A'_i$ is a Laurent monomial in $\{A_{r+1}, \ldots, A_d\}$ and vice versa.
        \item Corresponding mutable variables coincide up to multiplication by an element of $\mathbf{P}$: for $i\in[r]$, there is a Laurent monomial $M_i\in\mathbf{P}$ such that $A_i = M_iA'_i\in \C(X)$.
        \item The exchange ratios (3) coincide: $\hat{y}_i = \hat{y}'_i$ for $i\in[r]$.
    \end{itemize}
    Quasi-equivalence is an equivalence relation on seeds.
Seeds $\Sigma,\Sigma_0$ are related by a quasi-cluster transformation if there exists a finite sequence
$\underline{\mu}$ of mutations such that $\underline{\mu}(\Sigma) \sim \Sigma_0$.
\end{definition}

By the main result of \cite{F}, it is sufficient to check the conditions of quasi-equivalence in one cluster, and they will automatically hold in every other cluster.

\subsection{Cluster structures on open positroids}
\label{sec: cluster Scott}

In 2003, Scott \cite{S} established that the homogeneous coordinate ring of $\Gr(k,n)$ denoted $\C[\widehat{\Gr}(k,n)]$ has a cluster structure using Postnikov arrangements. In this paper, will we use a different construction using rectangles seed $\Sigma_{k,n}$ that generate the cluster structure for the Pl\"ucker ring $R_{k,n}$  isomorphic to $\C[\widehat{\Gr}(k,n)]$ as detailed in \cite[Section 6.7]{FWZ}. The cluster structure in the Pl\"ucker ring $R_{k,n}$ is generated from the mutations on the rectangular seed $\Sigma_{k,n}$.  Since (unlike \cite{S}) we always assume that the frozen variables are invertible, we in fact consider a cluster structure on $\C[\widehat{\Pi}_{k,n}^{\circ}]$. 

We first construct the quiver $Q_{k,n}$ where vertices are labeled by rectangles $r$ contained in the $k\times(n-k)$ rectangle $R$ along with the empty rectangle $\varnothing$. The frozen vertices are defined as rectangles of size $k\times j$ for $1\le j\le n-k$, size $i\times(n-k)$ for $1\le i\le k$, and $\emptyset$. The arrows connect from the $i\times j$ rectangle to the $i\times(j+1)$ rectangle, the $(i+1)\times j$ rectangle, and the $(i-1)\times(j-1)$ rectangle with the conditions that the rectangle has nonzero dimension, fits inside of $R$ and does not connect two frozens. There is also an arrow from the $\varnothing$ rectangle to the $1\times1$ rectangle, see Figure \ref{fig: Q_kn}.

Each rectangle $r$ contained in the $k\times(n-k)$ rectangle $R$ corresponds to a $k$-element subset of $[n]$ representing a Pl\"ucker coordinate. This correspondence is determined by positioning $r$ in $R$ such that the upper left corner coincides with the upper left corner of $R$. There exists a path from the upper right corner to the lower left corners of $R$ which traces out the smaller rectangle $r$, with steps from $1$ to $n$, where the map from $r$ to $I(r)$ is given by the vertical steps of the path, see Figure \ref{fig: rect to plucker}. Define $$\tilde{x}^{k,n}=\{\Delta_{I(r)}: r\text{ rectangle contained in }k\times(n-k) \text{ rectangle}\}$$
We may now define the rectangles seed $\Sigma_{k,n}=(\tilde{x}^{k,n},\varepsilon_{ij}(Q_{k,n})).
$

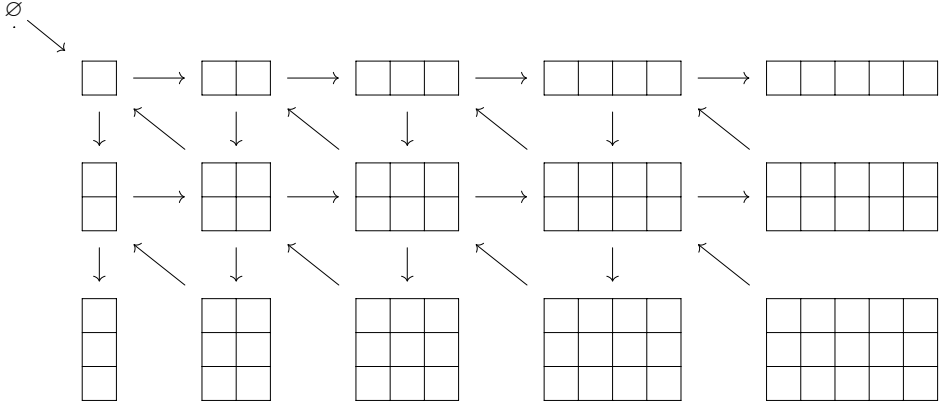
\begin{figure}
    \centering
    \scalebox{.9}{\begin{tikzpicture}[line cap=round,line join=round]
  \pic at (0,0)    {matrix={3}{1}{}{}          {}};
  \pic at (1.75,0)    {matrix={3}{2}{}{}          {}};
  \pic at (4,0)    {matrix={3}{3}{}{}   {}};
  \pic at (6.75,0)    {matrix={3}{4}{}{}     {}};
  \pic at (10,0) {matrix= {3}{5}{}{}{}};
  \pic at (0,2)    {matrix={2}{1}{}{}          {}};
  \pic at (1.75,2)    {matrix={2}{2}{}{}          {}};
  \pic at (4,2)    {matrix={2}{3}{}{}          {}};
  \pic at (6.75,2)    {matrix={2}{4}{}{}          {}};
  \pic at (10,2)    {matrix={2}{5}{}{}          {}};
  \pic at (0,3.5)    {matrix={1}{1}{}{}          {}};
  \pic at (1.75,3.5)    {matrix={1}{2}{}{}          {}};
  \pic at (4,3.5)    {matrix={1}{3}{}{}          {}};
  \pic at (6.75,3.5)    {matrix={1}{4}{}{}          {}};
  \pic at (10,3.5)    {matrix={1}{5}{}{}          {}};
  \pic at (-1,4)    {matrix={0}{0}{$\varnothing$}{}          {}};
\draw[->] (0.75,1.5)--(1.5,1.5);
\draw[->] (0.75,3.25)--(1.5,3.25);
\draw[->] (3,1.5)--(3.75,1.5);
\draw[->] (3,3.25)--(3.75,3.25);
\draw[->] (5.75,1.5)--(6.5,1.5);
\draw[->] (5.75,3.25)--(6.5,3.25);
\draw[->] (9,1.5)--(9.75,1.5);
\draw[->] (9,3.25)--(9.75,3.25);
\draw[->] (0.25,2.75)--(0.25,2.25);
\draw[->] (0.25,0.75)--(0.25,0.25);
\draw[->] (2.25,2.75)--(2.25,2.25);
\draw[->] (2.25,0.75)--(2.25,0.25);
\draw[->] (4.75,2.75)--(4.75,2.25);
\draw[->] (4.75,0.75)--(4.75,0.25);
\draw[->] (7.75,2.75)--(7.75,2.25);
\draw[->] (7.75,0.75)--(7.75,0.25);
\draw[->] (1.5,0.2)--(0.75,0.8);
\draw[->] (3.75,0.2)--(3,0.8);
\draw[->] (6.5,0.2)--(5.75,0.8);
\draw[->] (9.75,0.2)--(9,0.8);

\draw[->] (1.5,2.2)--(0.75,2.8);
\draw[->] (3.75,2.2)--(3,2.8);
\draw[->] (6.5,2.2)--(5.75,2.8);
\draw[->] (9.75,2.2)--(9,2.8);
\draw[->] (-0.8,4.1)--(-0.25,3.65);
\end{tikzpicture}
}
    \caption{The quiver $Q_{3,8}$. Vertices are labeled by rectangles contained in a $3\times 5$ rectangle. The grid is arranged such that the rectangles width increases from left to right and the heights increase from top to bottom.}
    \label{fig: Q_kn}
\end{figure}

%\begin{comment}

\begin{figure}
    \centering
    \scalebox{0.9}{

\tikzset{every picture/.style={line width=0.75pt}} %set default line width to 0.75pt        

\begin{tikzpicture}[x=0.75pt,y=0.75pt,yscale=-1,xscale=1]
%uncomment if require: \path (0,375); %set diagram left start at 0, and has height of 375

%Shape: Rectangle [id:dp8509924983908679] 
\draw  [color={rgb, 255:red, 0; green, 0; blue, 0 }  ,draw opacity=1 ][line width=1.5]  (146.67,70) -- (466.67,70) -- (466.67,250) -- (146.67,250) -- cycle ;
%Shape: Rectangle [id:dp003693418224195666] 
\draw  [color={rgb, 255:red, 74; green, 144; blue, 226 }  ,draw opacity=0.75 ][line width=2.25]  (146.67,70) -- (326.67,70) -- (326.67,170) -- (146.67,170) -- cycle ;
%Shape: Grid [id:dp9339802635540755] 
\draw  [draw opacity=0] (146.67,70) -- (466.67,70) -- (466.67,250) -- (146.67,250) -- cycle ; \draw  [color={rgb, 255:red, 155; green, 155; blue, 155 }  ,draw opacity=0.17 ] (146.67,70) -- (146.67,250)(166.67,70) -- (166.67,250)(186.67,70) -- (186.67,250)(206.67,70) -- (206.67,250)(226.67,70) -- (226.67,250)(246.67,70) -- (246.67,250)(266.67,70) -- (266.67,250)(286.67,70) -- (286.67,250)(306.67,70) -- (306.67,250)(326.67,70) -- (326.67,250)(346.67,70) -- (346.67,250)(366.67,70) -- (366.67,250)(386.67,70) -- (386.67,250)(406.67,70) -- (406.67,250)(426.67,70) -- (426.67,250)(446.67,70) -- (446.67,250) ; \draw  [color={rgb, 255:red, 155; green, 155; blue, 155 }  ,draw opacity=0.17 ] (146.67,70) -- (466.67,70)(146.67,90) -- (466.67,90)(146.67,110) -- (466.67,110)(146.67,130) -- (466.67,130)(146.67,150) -- (466.67,150)(146.67,170) -- (466.67,170)(146.67,190) -- (466.67,190)(146.67,210) -- (466.67,210)(146.67,230) -- (466.67,230)(146.67,250) -- (466.67,250) ; \draw  [color={rgb, 255:red, 155; green, 155; blue, 155 }  ,draw opacity=0.17 ]  ;

% Text Node
\draw (312.67,77) node [anchor=north west][inner sep=0.75pt]  [font=\scriptsize] [align=left] {$\displaystyle a$};
% Text Node
\draw (307.67,116.33) node [anchor=north west][inner sep=0.75pt]   [align=left] {$\displaystyle \vdots $};
% Text Node
\draw (278.67,154) node [anchor=north west][inner sep=0.75pt]  [font=\scriptsize] [align=left] {$\displaystyle a+i-1$};
% Text Node
\draw (298.67,96) node [anchor=north west][inner sep=0.75pt]  [font=\scriptsize] [align=left] {$\displaystyle a+1$};
% Text Node
\draw (78.67,176) node [anchor=north west][inner sep=0.75pt]  [font=\scriptsize] [align=left] {$\displaystyle n-k+i+1$};
% Text Node
\draw (126.33,190.67) node [anchor=north west][inner sep=0.75pt]   [align=left] {$\displaystyle \vdots $};
% Text Node
\draw (115.67,217) node [anchor=north west][inner sep=0.75pt]  [font=\scriptsize] [align=left] {$\displaystyle n-1$};
% Text Node
\draw (128.67,235) node [anchor=north west][inner sep=0.75pt]  [font=\scriptsize] [align=left] {$\displaystyle n$};
% Text Node
\draw (147.67,257.1) node [anchor=north west][inner sep=0.75pt]   [align=left] {$\displaystyle \underbrace{\ \ \ \ \ \ \ \ \ \ \ \ \ \ \ \ \ \ \ \ \ \ \ \ \ \ \ \ \ \ \ \ \ \ \ \ \ \ \ \ \ \ \ \ \ \ \ \ \ \ \ \ \ \ \ \ \ \ \ \ \ \ \ \ \ \ \ \ \ \ \ }_{n-k}$};
% Text Node
\draw (480.08,158.41) node  [rotate=-270] [align=left] {$\displaystyle \underbrace{\ \ \ \ \ \ \ \ \ \ \ \ \ \ \ \ \ \ \ \ \ \ \ \ \ \ \ \ \ \ \ \ \ \ \ \ \ \ \ }$};
% Text Node
\draw (494.17,152) node [anchor=north west][inner sep=0.75pt]  [font=\small] [align=left] {$\displaystyle k$};
% Text Node
\draw (331.67,72.6) node [anchor=north west][inner sep=0.75pt]  [color={rgb, 255:red, 155; green, 155; blue, 155 }  ,opacity=1 ] [align=left] {$\displaystyle \underbrace{\ \ \ \ \ \ \ \ \ \ \ \ \ \ \ \ \ \ \ \ \ \ \ \ \ \ \ \ \ }_{a-1\ steps}$};
% Text Node
\draw (148.67,173) node [anchor=north west][inner sep=0.75pt]  [color={rgb, 255:red, 155; green, 155; blue, 155 }  ,opacity=1 ] [align=left] {$\displaystyle \underbrace{\ \ \ \ \ \ \ \ \ \ \ \ \ \ \ \ \ \ \ \ \ \ \ \ \ \ \ \ \ \ \ \ \ \ \ \ \ \ \ \ }_{n-k-a+1\ steps}$};
% Text Node
\draw (168.67,93) node [anchor=north west][inner sep=0.75pt]  [font=\large,color={rgb, 255:red, 74; green, 144; blue, 226 }  ,opacity=1 ] [align=left] {$\displaystyle r$};
% Text Node
\draw (428.67,213) node [anchor=north west][inner sep=0.75pt]  [font=\large] [align=left] {$\displaystyle R$};

\end{tikzpicture}}
    \caption{The Pl\"ucker coordinate $\Delta_{I(r)}$ corresponding to a rectangle $r$ is given by the vertical steps in the path from the upper right corner to the lower left corner of the rectangle $R$ of size $k\times(n-k)$ that cuts out the rectangle $r$ positioned in the upper left corner of $R$. }
    \label{fig: rect to plucker}
\end{figure}
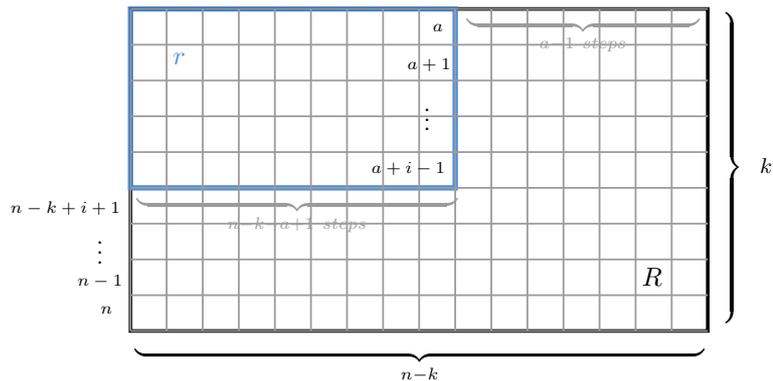
%\end{comment}

\begin{figure}
\begin{tikzcd}
\boxed{\Delta_{678}} \arrow{dr}& & & & & \\
 & \Delta_{578} \arrow{d}\arrow{r}& \Delta_{478} \arrow{d}\arrow{r}& \Delta_{378} \arrow{d}\arrow{r} & \Delta_{278} \arrow{d}\arrow{r} & \boxed{\Delta_{178}}\\
  & \Delta_{568} \arrow{d}\arrow{r}& \Delta_{458} \arrow{d}\arrow{r}\arrow{ul}& \Delta_{348} \arrow{d}\arrow{r}\arrow{ul}& \Delta_{238} \arrow{d}\arrow{r}\arrow{ul}& \boxed{\Delta_{128}}\arrow{ul}\\
   & \boxed{\Delta_{567}} & \boxed{\Delta_{456}} \arrow{ul}& \boxed{\Delta_{345}} \arrow{ul}& \boxed{\Delta_{234}} \arrow{ul}& \boxed{\Delta_{123}}\arrow{ul}\\
\end{tikzcd}
\label{fig: plucker quiver}
\caption{Cluster variables in $\Gr(3,8)$ corresponding to the rectangles seed.}
\end{figure}
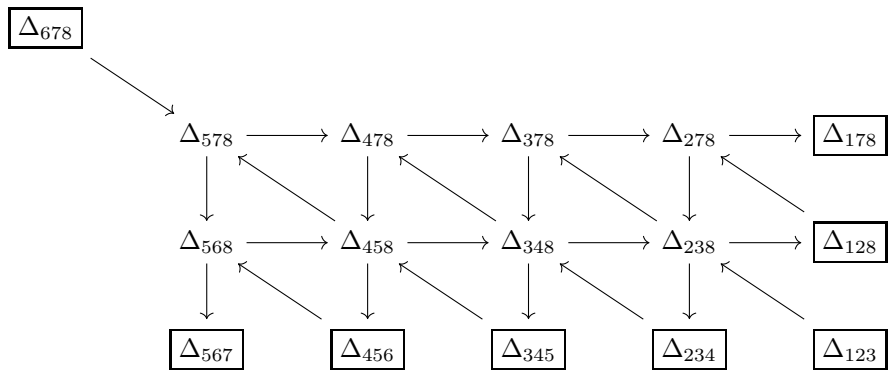

We can summarize (and slightly rephrase) the above constructions as follows. We define ordered subsets 
\begin{equation}
I(a,i)=\{a,a+1,\ldots,a+i-1,n-k+i+1,\ldots,n\},
\end{equation}
where $a=n-k-j+1$.

\begin{theorem}[\cite{S}]
\label{thm: scott}
The cluster variables in the initial seed are given by the minors $\Delta_{I(a,i)}$ for $1\le a\le n-k$ and $1\le i\le k$, and an additional frozen variable $\Delta_{n-k+1,\ldots,n}$. Furthermore:

1) The variables $\Delta_{I(a,i)}$ are frozen for $a=1$ and $i=k$, and mutable otherwise.

2) The quiver $Q_{k,n}$ consists of the following arrows:
\begin{equation}
\label{eq: scott}
\begin{tikzcd}
\Delta_{I(a,i)} \arrow{r}\arrow{d} & \Delta_{I(a-1,i)}\arrow{d}\\
\Delta_{I(a,i+1)} \arrow{r}& \Delta_{I(a-1,i+1)}\arrow{ul}
\end{tikzcd}
\end{equation}
3) There is an additional arrow $\Delta_{n-k+1,\ldots,n}\to \Delta_{I(n-k,1)}$.
\end{theorem}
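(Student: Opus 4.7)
The plan is to unfold the definitions from the preceding subsection and verify that, under the bijection between rectangles in $R$ and the subsets $I(a,i)$, the three items of the theorem match the rectangles-seed description given above. The content of the statement is really a translation from the ``rectangle language'' of Figure \ref{fig: Q_kn} into the ``index language'' of the $I(a,i)$'s; the actual cluster-algebraic fact (that this seed generates $\C[\widehat{\Pi}_{k,n}^\circ]$) is Scott's theorem, which we simply cite.

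First I would fix the bijection $r\leftrightarrow I(r)$ explicitly. Given a rectangle $r$ of size $i\times j$ placed in the upper-left corner of $R$ (of size $k\times(n-k)$), the boundary path from the upper right corner of $R$ to the lower left corner traces, in order, $(n-k-j)$ horizontal steps, then $i$ vertical steps, then $j$ horizontal steps, then $(k-i)$ vertical steps, for a total of $n$ steps. Labeling them $1,\ldots,n$ and recording the positions of the vertical steps shows that the set of vertical labels is exactly
\[
\{n-k-j+1,\ldots,n-k-j+i\}\cup\{n-k+i+1,\ldots,n\}.
\]
Setting $a=n-k-j+1$, this is precisely $I(a,i)$. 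As $r$ varies over rectangles with $1\le i\le k$ and $1\le j\le n-k$, the pair $(a,i)$ varies over $1\le a\le n-k$ and $1\le i\le k$, proving item~(1) up to the claim about frozens. The empty rectangle corresponds by convention to $\Delta_{n-k+1,\ldots,n}$, which is the extra frozen variable in the statement.

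For the frozen/mutable distinction, recall from the construction of $Q_{k,n}$ that the frozens are those rectangles of full height ($i=k$), those of full width ($j=n-k$, equivalently $a=1$), and $\varnothing$. Translating via the bijection gives exactly the two families $\Delta_{I(a,k)}$ and $\Delta_{I(1,i)}$ together with $\Delta_{n-k+1,\ldots,n}$, which proves~(1) and~(2).

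For item~(3) I would translate the three types of quiver arrows one by one. The rule $(i,j)\to(i,j+1)$ (increase width) means $a\to a-1$ with $i$ unchanged, giving $\Delta_{I(a,i)}\to\Delta_{I(a-1,i)}$. The rule $(i,j)\to(i+1,j)$ (increase height) gives $\Delta_{I(a,i)}\to\Delta_{I(a,i+1)}$. The rule $(i,j)\to(i-1,j-1)$ applied to the rectangle of size $(i+1)\times(j+1)$ gives $\Delta_{I(a-1,i+1)}\to\Delta_{I(a,i)}$, which is the diagonal arrow in \eqref{eq: scott}. The exclusion ``does not connect two frozens'' exactly matches the observation that these arrows are present only when at least one endpoint is mutable. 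The extra arrow $\varnothing\to 1\times 1$ from Figure~\ref{fig: Q_kn} becomes the stated arrow $\Delta_{n-k+1,\ldots,n}\to\Delta_{I(n-k,1)}$, since the $1\times 1$ rectangle sits at $a=n-k$, $i=1$.

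There is no real obstacle beyond keeping the index conversion $j=n-k-a+1$ straight; the only thing to double-check is that the four arrows around each mutable vertex of \eqref{eq: scott} account for all arrows in $Q_{k,n}$ incident to that vertex, which is immediate from the fact that in the rectangle quiver each non-boundary vertex has at most three outgoing and three incoming arrows of the three prescribed types, all of which are displayed in the square.
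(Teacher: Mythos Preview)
The paper does not actually prove this theorem: it is stated as a citation to \cite{S} (with the rectangles seed taken from \cite{FWZ}), and the only supporting material is the preceding description of $Q_{k,n}$ and the example in Figure~\ref{fig: plucker quiver}. Your proposal is therefore not competing with a proof in the paper but rather supplying the routine verification that the rectangles-seed description of Section~\ref{sec: cluster Scott} matches the $I(a,i)$ indexing, which is exactly the right thing to do and is carried out correctly.

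One small slip: your internal numbering drifts. After the frozen/mutable paragraph you write ``which proves~(1) and~(2)'', but item~(2) of the theorem is the description of the arrows, not the frozen set; you then begin the arrow discussion with ``For item~(3)'', whereas that paragraph actually establishes both items~(2) and~(3). The mathematics is fine, but you should realign the labels with the theorem as stated.
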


See Figure \ref{fig: plucker quiver} for the $\Pi_{3,8}^{\circ}$ example.

\section{Construction}
\label{sec: construction}

Let $\Pi_{k,n}^{\circ}$ be the open positroid variety in the Grassmannian $\Gr(k,n)$. 

\begin{definition}
Given $2\le a\le n-k$, we define an open subset $U_a\subset \Pi_{k,n}^{\circ}$ by the inequalities
\begin{equation}
U_a=\{\Delta_{I(a,i)}\neq 0,\ 1\le i\le k-1\}.
\end{equation}

%requiring that the minors
%$$
%\Delta_{a,n-k+2,\ldots,n},\Delta_{a,a+1,n-k+3,\ldots,n},\ldots, \Delta_{a,a+1,\ldots,a+k-2,n}.
%$$
%are all nonzero.
\end{definition}

For $0\le s\le i-1$ and $0\le t\le k-i-1$ we define ordered subsets 
%$$
%I(a,i)=\{a,a+1,\ldots,a+i-1,n-k+i+1,\ldots,n\},
%$$
$$
I'(a,s,i)=\{a,\ldots,a+s-1,a+i,a+s+1,\ldots,a+i-1,n-k+i+1,\ldots,n\},
$$
$$
I'_{\sort}(a,s,i)=\{a,\ldots,a+s-1,a+s+1,\ldots,a+i-1,a+i,n-k+i+1,\ldots,n\},
$$
and
$$
I''(a,t,i)=\{a,\ldots,a+i-1,n-k+i+1,\ldots,n-t-1,a+i,n-t+1,\ldots,n\},
$$
$$
I''_{\sort}(a,t,i)=\{a,\ldots,a+i-1,a+i,n-k+i+1,\ldots,n-t-1,n-t+1,\ldots,n\},
$$
Note that $I'(a,s,i)$ is obtained from $I(a,i)$ by replacing $a+s$ by $a+i$ (without changing the order), while $I''(a,t,i)$ is obtained from $I(a,i)$ by replacing $n-t$ by $a+i$. Also, $I'(a,s,i)$ and $I'_{\sort}(a,s,i)$ are related by an $(i-s)$-cycle while $I''(a,t,i)$ and $I''_{\sort}(a,t,i)$ are related by a $(k-i-1-t)$-cycle.

\begin{lemma}
\label{lem: Cramer}
Given a matrix $V\in U_a$, for all $1\le i\le k-2$ we have an identity
\begin{align*}
v_{a+i}&=\sum_{s=0}^{i-1}\frac{\Delta_{I'(a,s,i)}}{\Delta_{I(a,i)}}v_{a+s}+\sum_{t=0}^{k-i-1}\frac{\Delta_{I''(a,t,i)}}{\Delta_{I(a,i)}}v_{n-t}\\
&=\sum_{s=0}^{i-1}(-1)^{i-s-1}\frac{\Delta_{I'_{\sort}(a,s,i)}}{\Delta_{I(a,i)}}v_{a+s}+\sum_{t=0}^{k-i-1}(-1)^{k-i-t}\frac{\Delta_{I''_{\sort}(a,t,i)}}{\Delta_{I(a,i)}}v_{n-t}
\end{align*}
\end{lemma}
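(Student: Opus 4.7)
The plan is to recognize Lemma \ref{lem: Cramer} as a direct application of Cramer's rule, followed by a careful bookkeeping of signs to pass between ordered and sorted Pl\"ucker coordinates.

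First I would note that on $U_a$ the minor $\Delta_{I(a,i)}$ is nonzero for all $1\le i\le k-1$, so the $k$ vectors $v_a, v_{a+1}, \ldots, v_{a+i-1}, v_{n-k+i+1}, \ldots, v_n$ are linearly independent and form a basis of $\C^k$. Hence $v_{a+i}$ is a unique linear combination of these vectors, and the task reduces to identifying the coefficients.

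Next, I would apply Cramer's rule. The coefficient of $v_{a+s}$ equals the determinant obtained by replacing $v_{a+s}$ with $v_{a+i}$ in the above ordered $k$-tuple, divided by $\Delta_{I(a,i)}$. Since this replacement occurs at position $s+1$ of $I(a,i)$, the new ordered index set is precisely $I'(a,s,i)$, giving the coefficient $\Delta_{I'(a,s,i)}/\Delta_{I(a,i)}$. An identical argument, this time replacing $v_{n-t}$ at position $k-t$, yields the coefficient $\Delta_{I''(a,t,i)}/\Delta_{I(a,i)}$ of $v_{n-t}$. This establishes the first equality.

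For the second equality, I would rewrite each ordered Pl\"ucker coordinate in terms of its sorted counterpart by computing the sign of the relevant permutation. Sorting $I'(a,s,i)$ amounts to moving $a+i$ from position $s+1$ past the block $a+s+1, \ldots, a+i-1$ to position $i$, a cyclic permutation of length $i-s$ with sign $(-1)^{i-s-1}$. Sorting $I''(a,t,i)$ similarly slides $a+i$ from position $k-t$ past the block $n-k+i+1, \ldots, n-t-1$ to position $i+1$, an analogous cyclic permutation whose length supplies the second sign. I expect no real obstacle: the argument is pure bookkeeping, and the only care needed is in correctly locating $a+i$ within each index set from the explicit definitions of $I'(a,s,i)$ and $I''(a,t,i)$.
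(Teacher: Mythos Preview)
Your proposal is correct and follows essentially the same approach as the paper's own proof: both observe that $\Delta_{I(a,i)}\neq 0$ makes the indexed vectors a basis, apply Cramer's rule to read off the coefficients as the ratios $\Delta_{I'(a,s,i)}/\Delta_{I(a,i)}$ and $\Delta_{I''(a,t,i)}/\Delta_{I(a,i)}$, and then convert to sorted Pl\"ucker coordinates via the sign of the cyclic permutation that moves $a+i$ into place. Your write-up is in fact slightly more detailed than the paper's, which simply states the Cramer's rule formulas and the resulting signs without spelling out the position bookkeeping.
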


\begin{proof}
Since $\Delta_{I(a,i)}(V)\neq 0$, the vectors 
$
v_{a},\ldots,v_{a+i-1},v_{n-k+i+1},\ldots,v_n
$
span $\C^k$. We can uniquely write $v_{a+i}$ as linear combination of these:
$$
v_{a+i}=x_0v_{a}+\ldots+x_{i-1}v_{a+i-1}+y_{k-i-1}v_{n-k+i+1}+\ldots+y_0v_{n}.
$$
Now the coefficients $x_s,y_t$ are determined by the Cramer's Rule:
$$
x_s=\frac{\Delta_{I'(a,s,i)}}{\Delta_{I(a,i)}}=(-1)^{i-s-1}\frac{\Delta_{I'_{\sort}(a,s,i)}}{\Delta_{I(a,i)}},\ 
y_t=\frac{\Delta_{I''(a,t,i)}}{\Delta_{I(a,i)}}=(-1)^{k-i-t}\frac{\Delta_{I''_{\sort}(a,t,i)}}{\Delta_{I(a,i)}}.
$$
\end{proof}

\begin{example}
\label{ex: 3,8}
    For $a=3$ the open subset $U_3\subset\Pi_{3,8}^{\circ}$ is defined by $\Delta_{348},\Delta_{378}\ne0$, indicating that the vectors $v_3,v_7,v_8$ span $\C^3$ and the vectors $v_3,v_4,v_8$ span $\C^3$. Using Cramer's rule we may express the vector $v_4$ as 
$$
    v_4=\frac{\Delta_{478}}{\Delta_{378}}v_3+\frac{\Delta_{348}}{\Delta_{378}}v_7+\frac{\Delta_{374}}{\Delta_{378}}v_8
    =\frac{\Delta_{478}}{\Delta_{378}}v_3+\frac{\Delta_{348}}{\Delta_{378}}v_7-\frac{\Delta_{347}}{\Delta_{378}}v_8
$$
\end{example}

\begin{example}
\label{ex: 5,10}
    The open subset $U_3\subset\Pi_{5,10}^{\circ}$ is defined by $\Delta_{3,7,8,9,10},\Delta_{3,4,8,9,10},\Delta_{3,4,5,9,10},\Delta_{3,4,5,6,10}\ne0$. From this collection of nonvanishing Pl\"ucker coordinates we have 
$$
\C^5=\langle v_3,v_7,v_8,v_9,v_{10}\rangle=
\langle v_3,v_4,v_8,v_9,v_{10}\rangle=
\langle v_3,v_4,v_5,v_9,v_{10}\rangle.
$$
%that the appropriate combination of vectors $v_3,v_4,v_5,v_6,v_7,v_8,v_9,v_{10}$ span $\C^5$. Given that $k=5$, we may express the $k-2=3$ vectors $v_4,v_5,v_6$ in terms of the spanning vectors above using Cramer's rule. 
By Cramer's Rule, we can expand $v_4,v_5,v_6$ in the respective bases:
\begin{align*}
    v_4%&=\frac{\Delta_{4,7,8,9,10}}{\Delta_{3,7,8,9,10}}v_3+\frac{\Delta_{3,4,8,9,10}}{\Delta_{3,7,8,9,10}}v_7+\frac{\Delta_{3,7,4,9,10}}{\Delta_{3,7,8,9,10}}v_8+\frac{\Delta_{3,7,8,4,10}}{\Delta_{3,7,8,9,10}}v_9+\frac{\Delta_{3,7,8,9,4}}{\Delta_{3,7,8,9,10}}v_{10}\\
    &=\frac{\Delta_{4,7,8,9,10}}{\Delta_{3,7,8,9,10}}v_3+\frac{\Delta_{3,4,8,9,10}}{\Delta_{3,7,8,9,10}}v_7-\frac{\Delta_{3,4,7,9,10}}{\Delta_{3,7,8,9,10}}v_8+\frac{\Delta_{3,4,7,8,10}}{\Delta_{3,7,8,9,10}}v_9-\frac{\Delta_{3,4,7,8,9}}{\Delta_{3,7,8,9,10}}v_{10}\\
    \vspace{.25cm}
    v_5
    %&=\frac{\Delta_{5,4,8,9,10}}{\Delta_{3,4,8,9,10}}v_3+\frac{\Delta_{3,5,8,9,10}}{\Delta_{3,4,8,9,10}}v_4+\frac{\Delta_{3,4,5,9,10}}{\Delta_{3,4,8,9,10}}v_8+\frac{\Delta_{3,4,8,5,10}}{\Delta_{3,4,8,9,10}}v_9+\frac{\Delta_{3,4,8,9,5}}{\Delta_{3,4,8,9,10}}v_{10}\\
    &=-\frac{\Delta_{4,5,8,9,10}}{\Delta_{3,4,8,9,10}}v_3+\frac{\Delta_{3,5,8,9,10}}{\Delta_{3,4,8,9,10}}v_4+\frac{\Delta_{3,4,5,9,10}}{\Delta_{3,4,8,9,10}}v_8-\frac{\Delta_{3,4,5,8,10}}{\Delta_{3,4,8,9,10}}v_9+\frac{\Delta_{3,4,5,8,9}}{\Delta_{3,4,8,9,10}}v_{10}\\
    \vspace{.25cm}
    v_6
    %&=\frac{\Delta_{6,4,5,9,10}}{\Delta_{3,4,5,9,10}}v_3+\frac{\Delta_{3,6,5,9,10}}{\Delta_{3,4,5,9,10}}v_4+\frac{\Delta_{3,4,6,9,10}}{\Delta_{3,4,5,9,10}}v_5+\frac{\Delta_{3,4,5,6,10}}{\Delta_{3,4,5,9,10}}v_9+\frac{\Delta_{3,4,5,9,6}}{\Delta_{3,4,5,9,10}}v_{10}\\
    &=\frac{\Delta_{4,5,6,9,10}}{\Delta_{3,4,5,9,10}}v_3-\frac{\Delta_{3,5,6,9,10}}{\Delta_{3,4,5,9,10}}v_4+\frac{\Delta_{3,4,6,9,10}}{\Delta_{3,4,5,9,10}}v_5+\frac{\Delta_{3,4,5,6,10}}{\Delta_{3,4,5,9,10}}v_9-\frac{\Delta_{3,4,5,6,9}}{\Delta_{3,4,5,9,10}}v_{10}
\end{align*}
\end{example}

\begin{definition}
Given a matrix $V=(v_1,\ldots,v_n)$ in $U_a$, we define two matrices $V_1,V_2$ as follows:
\begin{equation}
\label{eq: V1 V2}
V_1=(v_{a},\ldots,v_n),\ V_2=(v_1,\ldots,v_a,u_1\ldots,u_{k-2},v_n)
\end{equation}
where 
\begin{equation}
\label{eq: def u}
u_i=v_{a+i}-\sum_{s=0}^{i-1}\frac{\Delta_{I'(a,s,i)}}{\Delta_{I(a,i)}}v_{a+s}=\sum_{t=0}^{k-i-1}\frac{\Delta_{I''(a,t,i)}}{\Delta_{I(a,i)}}v_{n-t}.
\end{equation}
\end{definition}
The second equation in \eqref{eq: def u} follows from Lemma \ref{lem: Cramer}.

\begin{example}
\label{ex: 3 8 V1V2}
    Continuing Example \ref{ex: 3,8}, we decompose the matrix 
    $$
    V=\begin{pmatrix}
        v_1 &v_2 &v_3 &v_4 &v_5 &v_6 &v_7 &v_8
    \end{pmatrix}$$
    into 
    $$V_1=\begin{pmatrix}
        v_3 &v_4 &v_5 &v_6 &v_7 &v_8
    \end{pmatrix},\quad  V_2=\begin{pmatrix}
        v_1 &v_2 &v_3 &u &v_8
    \end{pmatrix}$$
    where $$u=v_4-\frac{\Delta_{478}}{\Delta_{378}}v_3=\frac{\Delta_{348}}{\Delta_{378}}v_7-\frac{\Delta_{347}}{\Delta_{378}}v_8.$$
\end{example}

\begin{lemma}
\label{lem: intersection}
Assume that $V\in U_a$. Then for $1\le i\le k-2$ the intersection of two subspaces
$$
\langle v_a,v_{a+1},\ldots,v_{a+i}\rangle\cap 
\langle v_{n-k+i+1},v_{n-k+i+2},\ldots,v_{n}\rangle.
$$
is one-dimensional and spanned by the vector $u_i$.
\end{lemma}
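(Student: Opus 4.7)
The plan is to prove Lemma \ref{lem: intersection} via a straightforward dimension count followed by exhibiting $u_i$ as a nonzero element of the intersection.

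First I would compute the dimensions of the two subspaces in play. Set $A=\langle v_a,\ldots,v_{a+i}\rangle$ and $B=\langle v_{n-k+i+1},\ldots,v_n\rangle$. Since $V\in\Pi_{k,n}^{\circ}$, any $k$ cyclically consecutive columns are linearly independent, hence so is every subset of such a run. The vectors $v_a,\ldots,v_{a+i}$ form a size-$(i+1)$ subset of the cyclically consecutive block $v_a,\ldots,v_{a+k-1}$ (using $i\le k-2<k-1$), giving $\dim A=i+1$. Similarly $v_{n-k+i+1},\ldots,v_n$ is a size-$(k-i)$ subset of $v_{n-k+1},\ldots,v_n$, so $\dim B=k-i$.

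Next I would compute $\dim(A+B)$. The sum $A+B$ contains $v_a,\ldots,v_{a+i-1},v_{n-k+i+1},\ldots,v_n$, and this list of $k$ vectors is a basis of $\C^k$ precisely because $\Delta_{I(a,i)}(V)\neq 0$ on $U_a$. Hence $A+B=\C^k$, and the dimension formula gives
\begin{equation*}
\dim(A\cap B)=\dim A+\dim B-\dim(A+B)=(i+1)+(k-i)-k=1.
\end{equation*}

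To finish, I would show $u_i$ lies in both $A$ and $B$ and is nonzero. The two equivalent formulas in \eqref{eq: def u} make this transparent: the first expression
$u_i=v_{a+i}-\sum_{s=0}^{i-1}\tfrac{\Delta_{I'(a,s,i)}}{\Delta_{I(a,i)}}v_{a+s}$
exhibits $u_i\in A$, while the second expression
$u_i=\sum_{t=0}^{k-i-1}\tfrac{\Delta_{I''(a,t,i)}}{\Delta_{I(a,i)}}v_{n-t}$
exhibits $u_i\in B$. Finally, $u_i\neq 0$ because $u_i=0$ would force $v_{a+i}\in\langle v_a,\ldots,v_{a+i-1}\rangle$, contradicting the linear independence of $v_a,\ldots,v_{a+i}$ established above. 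Since $A\cap B$ is one-dimensional and $u_i$ is a nonzero element of it, $A\cap B=\langle u_i\rangle$.

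There is no real obstacle here; the only point requiring care is being explicit about why the cyclic consecutive-independence condition defining $\Pi_{k,n}^{\circ}$ applies to the non-cyclic runs $\{a,\ldots,a+i\}$ and $\{n-k+i+1,\ldots,n\}$, which reduces to noting that both runs fit inside a window of length $k$.
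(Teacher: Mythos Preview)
Your proof is correct and follows essentially the same approach as the paper: show $u_i$ lies in both subspaces via the two formulas in \eqref{eq: def u}, verify $u_i\neq 0$ by linear independence of $v_a,\ldots,v_{a+i}$, and use the dimension formula together with $\Delta_{I(a,i)}\neq 0$ to conclude the intersection is one-dimensional. The only cosmetic difference is that the paper invokes the specific nonvanishing minors $\Delta_{I(a,i)}$ and $\Delta_{I(a,i+1)}$ to establish independence, while you appeal to the ambient $\Pi_{k,n}^{\circ}$ condition; both are valid.
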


\begin{proof}
By \eqref{eq: def u} and Lemma \ref{lem: Cramer} the vector $u_i$ is indeed contained in this intersection. Since $\Delta_{I(a,i+1)}\neq 0$, the vectors $v_a,\ldots,v_{a+i}$ are linearly independent and hence $u_i\neq 0$.
Since $\Delta_{I(a,i)}\neq 0$, the vectors $v_{n-k+i+1},\ldots,v_n$ are linearly independent as well and altogether the two subspaces span $\C^k$. Now
$$
\dim \langle v_a,v_{a+1},\ldots,v_{a+i}\rangle\cap 
\langle v_{n-k+i+1},v_{n-k+i+2},\ldots,v_{n}\rangle=(i+1)+(k-i)-k=1.
$$
\end{proof}

\begin{lemma}
\label{lem: u wedge}
a) We have 
$$
v_a\wedge u_1\wedge\cdots \wedge u_i=v_a\wedge v_{a+1}\wedge\cdots \wedge v_{a+i}.
$$
b)
We have 
$$
u_i\wedge\cdots \wedge u_{k-2}\wedge v_n= \frac{\Delta_{I(a,k-1)}}{\Delta_{I(a,i)}}v_{n-k+i+1}\wedge\cdots \wedge v_{n-1}\wedge v_{n}.
$$
\end{lemma}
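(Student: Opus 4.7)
The plan is to prove (a) by a straightforward induction on $i$ from the first expression in \eqref{eq: def u}, and then to deduce (b) by combining (a) with the second expression in \eqref{eq: def u} and a one-line extraction of the scalar.

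For part (a), I would induct on $i$. The base case $i=1$ is immediate, since $u_1=v_{a+1}-(\Delta_{I'(a,0,1)}/\Delta_{I(a,1)})\,v_a$ already gives $v_a\wedge u_1=v_a\wedge v_{a+1}$. For the inductive step, the first expression in \eqref{eq: def u} writes $u_i=v_{a+i}+w_i$ with $w_i\in\langle v_a,\ldots,v_{a+i-1}\rangle$, while the inductive hypothesis gives $v_a\wedge u_1\wedge\cdots\wedge u_{i-1}=v_a\wedge v_{a+1}\wedge\cdots\wedge v_{a+i-1}$. Wedging any vector in $\langle v_a,\ldots,v_{a+i-1}\rangle$ with the right-hand side yields zero, so only the $v_{a+i}$ summand of $u_i$ contributes and the identity follows.

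For part (b), I would compute the $k$-form $v_a\wedge v_{a+1}\wedge\cdots\wedge v_{a+i-1}\wedge u_i\wedge\cdots\wedge u_{k-2}\wedge v_n$ in two different ways. On one hand, applying (a) with $i-1$ in place of $i$ rewrites $v_a\wedge v_{a+1}\wedge\cdots\wedge v_{a+i-1}$ as $v_a\wedge u_1\wedge\cdots\wedge u_{i-1}$, and then applying (a) with $k-2$ in place of $i$ identifies the full wedge with $v_a\wedge v_{a+1}\wedge\cdots\wedge v_{a+k-2}\wedge v_n=\Delta_{I(a,k-1)}$, using $I(a,k-1)=\{a,\ldots,a+k-2,n\}$. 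On the other hand, the second expression in \eqref{eq: def u} shows that every $u_j$ with $j\ge i$ lies in the subspace $W:=\langle v_{n-k+i+1},\ldots,v_n\rangle$, which has dimension $k-i$ because $\Delta_{I(a,i)}\ne 0$; hence $u_i\wedge\cdots\wedge u_{k-2}\wedge v_n$ is a top form on $W$ and equals $\lambda\cdot v_{n-k+i+1}\wedge\cdots\wedge v_n$ for a unique scalar $\lambda$. Wedging this equality on the left with $v_a\wedge\cdots\wedge v_{a+i-1}$ produces $\lambda\cdot\Delta_{I(a,i)}$; matching this with the first computation gives $\lambda=\Delta_{I(a,k-1)}/\Delta_{I(a,i)}$.

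The only potential source of trouble is sign bookkeeping, but since $a\le n-k$ ensures that both $I(a,i)$ and $I(a,k-1)$ are already listed in strictly increasing order, and the wedge products above are written in the same natural left-to-right order, no sign corrections arise. Accordingly the lemma reduces to one inductive identity from (a) combined with one Cramer-type scalar extraction, with no genuine obstacle beyond bookkeeping.
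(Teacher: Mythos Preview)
Your proof is correct. Part (a) is essentially the paper's argument, phrased as an induction rather than as a one-line expansion of $v_a\wedge(v_{a+1}+\cdots)\wedge\cdots\wedge(v_{a+i}+\cdots)$.

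For part (b) you take a genuinely different route from the paper. The paper extracts from the second expression in \eqref{eq: def u} the leading term $u_j\in \frac{\Delta_{I(a,j+1)}}{\Delta_{I(a,j)}}v_{n-k+j+1}+\langle v_{n-k+j+2},\ldots,v_n\rangle$ (after checking that $I''(a,k-j-1,j)=I(a,j+1)$), then multiplies these leading coefficients together to get a telescoping product $\prod_{j=i}^{k-2}\Delta_{I(a,j+1)}/\Delta_{I(a,j)}=\Delta_{I(a,k-1)}/\Delta_{I(a,i)}$. Your argument instead only uses the qualitative fact that $u_j\in\langle v_{n-k+i+1},\ldots,v_n\rangle$ for $j\ge i$, so the $(k-i)$-form $u_i\wedge\cdots\wedge u_{k-2}\wedge v_n$ is a scalar multiple of $v_{n-k+i+1}\wedge\cdots\wedge v_n$, and then pins down the scalar by wedging with $v_a\wedge\cdots\wedge v_{a+i-1}$ and invoking part (a). Your approach is cleaner in that it avoids identifying the individual leading coefficients and the combinatorial identity $I''(a,k-j-1,j)=I(a,j+1)$; the paper's approach, on the other hand, gives those explicit coefficients as a byproduct, which could be useful elsewhere.
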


\begin{proof}
a) By \eqref{eq: def u} we have 
$$
u_{i}\in v_{a+i}+\langle v_a,\ldots,v_{a+i-1}\rangle, 
$$
so
$$
v_a\wedge u_1\wedge\cdots \wedge u_i=
v_a\wedge (v_{a+1}+\ldots)\wedge \cdots (v_{a+i}+\ldots)=v_a\wedge v_{a+1}\wedge\cdots \wedge v_{a+i}.
$$

b) Similarly, by the second equation in \eqref{eq: def u} we have 
$$
u_{i}\in \frac{\Delta_{I''(a,k-i-1,i)}}{\Delta_{I(a,i)}}v_{n-k+i+1}+\langle v_{n-k+i+2},\ldots,v_n\rangle. 
$$
Note that $I''(a,k-i-1,i)$ is obtained from $I(a,i)=\{a,\ldots,a-i-1,n-k+i+1,\ldots,n\}$ by replacing $n-k+i+1$ with $a+i$, so in fact 
$I''(a,k-i-1,i)=I(a,i+1)$. Now
$$
u_i\wedge\cdots \wedge u_{k-2}\wedge v_n=
\left(\frac{\Delta_{I(a,i+1)}}{\Delta_{I(a,i)}}v_{n-k+i+1}+\ldots\right)\wedge\cdots \wedge 
\left(\frac{\Delta_{I(a,k-1)}}{\Delta_{I(a,k-2)}}v_{n-1}+\ldots\right)\wedge v_n=
$$
$$
\frac{\Delta_{I(a,i+1)}}{\Delta_{I(a,i)}}\cdot \frac{\Delta_{I(a,i+2)}}{\Delta_{I(a,i+1)}}\cdots \frac{\Delta_{I(a,k-1)}}{\Delta_{I(a,k-2)}}v_{n-k+i+1}\wedge\cdots \wedge v_n.
$$
The factors in the coefficient cancel pairwise except for 
$\Delta_{I(a,k-1)}/\Delta_{I(a,i)}.$
\end{proof}

\begin{lemma}
\label{lem: V1 V2 positroids}
If $V\in U_a$ then $V_1\in \Pi_{k,n-a+1}^{\circ}$ and $V_2\in \Pi_{k,a+k-1}^{\circ}$.
\end{lemma}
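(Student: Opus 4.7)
\emph{Plan.} The plan is to verify, in each case, the defining condition of an open positroid variety: every cyclically consecutive $k$-window of columns must have nonvanishing top wedge. The assertion about $V_1$ will reduce directly to the definition of $U_a$ together with $V \in \Pi_{k,n}^\circ$, while the assertion about $V_2$ requires Lemma \ref{lem: u wedge} to translate wedges involving the vectors $u_i$ back into Pl\"ucker coordinates of $V$.

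For $V_1 = (v_a, v_{a+1}, \ldots, v_n)$, a non-wrapping cyclic $k$-window is simply $k$ consecutive columns of $V$, so its wedge is a nonvanishing cyclic Pl\"ucker of $V$. A wrapping window has the form $\{v_a, \ldots, v_{a+\ell-1}, v_{n-k+\ell+1}, \ldots, v_n\} = I(a,\ell)$ for some $1 \le \ell \le k-1$, and $\Delta_{I(a,\ell)}(V) \neq 0$ by the definition of $U_a$. This will settle the first assertion.

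For $V_2$ I would label the columns $w_1, \ldots, w_{a+k-1}$ (so $w_i = v_i$ for $i \le a$, $w_{a+j} = u_j$ for $1 \le j \le k-2$, and $w_{a+k-1} = v_n$) and enumerate the $a+k-1$ cyclic windows. Four types arise, and in each case the wedge reduces to a known Pl\"ucker of $V$: windows contained in $\{v_1, \ldots, v_a\}$ are consecutive Pl\"uckers of $V$; windows of the form $\{v_j, \ldots, v_a, u_1, \ldots, u_m\}$ with $m \le k-2$ collapse by Lemma \ref{lem: u wedge}(a) to $\pm\Delta_{j, j+1, \ldots, j+k-1}(V)$; the window $\{v_a, u_1, \ldots, u_{k-2}, v_n\}$ collapses by the same lemma (with $i = k-2$) to $\pm\Delta_{I(a,k-1)}(V)$; and a wrapping window of the shape $\{u_s, \ldots, u_{k-2}, v_n, v_1, \ldots, v_s\}$ with $1 \le s \le a$ reduces by Lemma \ref{lem: u wedge}(b) to $\pm \frac{\Delta_{I(a,k-1)}}{\Delta_{I(a,s)}}\Delta_{n-k+s+1, \ldots, n, 1, \ldots, s}(V)$. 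Each factor that appears is nonvanishing, either by $V \in \Pi_{k,n}^\circ$ or by the definition of $U_a$.

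The only delicate case will be a wrapping window whose trailing block also contains $u$-vectors, which happens precisely when the cyclic shift $s$ exceeds $a$ (so necessarily $a \le k-2$). My plan here is to first apply Lemma \ref{lem: u wedge}(a) to collapse the block $v_a \wedge u_1 \wedge \cdots \wedge u_{s-a}$ to $v_a \wedge v_{a+1} \wedge \cdots \wedge v_s$; once this substitution is made the window has the same shape as the previous wrapping case, and a subsequent application of Lemma \ref{lem: u wedge}(b) completes the reduction. Keeping track of this bookkeeping, together with the boundary value $s = k-1$ where the leading block of $u$'s degenerates to $v_n$ alone, is the main obstacle, but in every case the cyclic Pl\"ucker of $V_2$ emerges as a nonzero product of minors of $V$, which gives $V_2 \in \Pi_{k, a+k-1}^\circ$.
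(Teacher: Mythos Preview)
Your argument is correct and follows the same strategy as the paper: enumerate the cyclic $k$-windows of $V_1$ and $V_2$ and reduce each, via Lemma~\ref{lem: u wedge}, to a nonvanishing Pl\"ucker minor of $V$. Your case analysis is in fact a bit more thorough than the paper's: the paper's Case~4 writes the wrapping minor as $u_i\wedge\cdots\wedge u_{k-2}\wedge v_n\wedge v_1\wedge\cdots\wedge v_i$, tacitly assuming the trailing block consists only of $v$'s (i.e.\ $i\le a$), whereas you explicitly treat the situation $a\le k-2$ where a wrapping window acquires $u$-vectors on both ends and requires both parts of Lemma~\ref{lem: u wedge} in succession.
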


\begin{proof}
The first statement is clear by the definition of $U_a$. To prove the second one, we need to compute the following minors: 

1) $\Delta_{b,\ldots,b+k-1}(V_2)$, $b+k-1\le a$. This minor does not change, so $\Delta_{b,\ldots,b+k-1}(V_2)=\Delta_{b,\ldots,b+k-1}(V)\neq 0$.

2) $\Delta_{b,\ldots,b+k-1}(V_2)$, $b<a<b+k-1$. Let $i=b+k-1-a$, then by Lemma \ref{lem: u wedge}(a) we get
$$
\Delta_{b,\ldots,b+k-1}(V_2)=v_b\wedge \cdots \wedge v_a\wedge u_1\wedge \cdots \wedge u_i=$$
$$v_b\wedge \cdots \wedge v_a\wedge v_{a+1}\wedge \cdots \wedge v_{a+i}=\Delta_{b,\ldots,b+k-1}(V)\neq 0.
$$

3) $\Delta_{a,\ldots,a+k-1}(V_2)=v_a\wedge u_1\wedge \cdots \wedge u_{k-2}\wedge v_n=v_a\wedge \cdots \wedge v_{a+k-2}\wedge v_n\neq 0$ by definition of $U_a$.

4) Finally, we need to consider the minor $u_{i}\wedge\cdots \wedge u_{k-2}\wedge v_n\wedge v_1\cdots v_{i}$ which by Lemma \ref{lem: u wedge}(b) equals 
$$
\frac{\Delta_{I(a,k-1)}}{\Delta_{I(a,i)}}v_{n-k+i+1}\wedge \cdots \wedge v_n\wedge v_1\cdots v_{i}=\frac{\Delta_{I(a,k-1)}}{\Delta_{I(a,i)}}\Delta_{n-k+i+1,\ldots,n,1,\ldots,i}\neq 0.
$$

\end{proof}

\begin{theorem}
The map $\Phi_a: V\mapsto (V_1,V_2)$ defined by \eqref{eq: V1 V2} is an isomorphism  between $U_a\subset \Pi_{k,n}^{\circ}$ and the product $\Pi_{k,n-a+1}^{\circ}\times \Pi_{k,a+k-1}^{\circ}$.
\end{theorem}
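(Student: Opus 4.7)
\smallskip

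\noindent\textbf{Proof plan.}
The strategy is to construct an explicit inverse morphism $\Psi\colon \Pi_{k,n-a+1}^{\circ}\times \Pi_{k,a+k-1}^{\circ}\to U_a$ and verify $\Phi_a\circ\Psi=\mathrm{id}$ and $\Psi\circ\Phi_a=\mathrm{id}$. Lemma~\ref{lem: V1 V2 positroids} already shows that $\Phi_a$ lands in the product, so the only remaining content is the construction of $\Psi$. The guiding observation is that the $k$-tuple $(v_a,u_1,\ldots,u_{k-2},v_n)$ is encoded in both factors: from $V_1$ one produces $u_1,\ldots,u_{k-2}$ via formula \eqref{eq: def u}, while in $V_2$ these vectors are already present as the consecutive block of columns at positions $a,\ldots,a+k-1$. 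Aligning the two realizations through a unique $GL_k$ transformation then lets one read off $v_1,\ldots,v_{a-1}$ from $V_2$ and reconstruct~$V$.

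Concretely, given $(V_1,V_2)$, I would fix any representative of $V_1$ with columns $v_a,\ldots,v_n\in\C^k$ and define $u_1,\ldots,u_{k-2}$ by \eqref{eq: def u}. The denominators $\Delta_{I(a,i)}(V_1)$ are automatically nonzero: in $V_1$'s own numbering the subset $I(a,i)$ becomes $\{1,\ldots,i\}\cup\{m-k+i+1,\ldots,m\}$ with $m=n-a+1$, which is a cyclically consecutive window of length $k$ and hence nonvanishing by the positroid condition on $V_1$. By iterated application of Lemma \ref{lem: u wedge}(a), $v_a\wedge u_1\wedge\cdots\wedge u_{k-2}\wedge v_n=\Delta_{I(a,k-1)}(V_1)$ is another such minor and is nonzero, so $(v_a,u_1,\ldots,u_{k-2},v_n)$ is a basis of $\C^k$. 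Since the columns of $V_2$ at positions $a,\ldots,a+k-1$ also form a basis by its own positroid condition, there is a unique $GL_k$-representative of $V_2$ whose columns at these positions coincide with $(v_a,u_1,\ldots,u_{k-2},v_n)$; the remaining columns supply $v_1,\ldots,v_{a-1}$, and I set $V=(v_1,\ldots,v_n)$. A change of representative of $V_1$ by $A\in GL_k$ carries the entire construction by $A$, so $V$ is well defined as a point of $\Gr(k,n)$.

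The main verification is that $V\in U_a$. The inequalities $\Delta_{I(a,i)}(V)\neq 0$ are immediate since these minors involve only columns coming from $V_1$. To check $\Delta_{b,\ldots,b+k-1}(V)\neq 0$ for all cyclic windows, I would split into three cases. When $a\le b$ and $b+k-1\le n$, the minor equals a consecutive minor of $V_1$. When $1\le b<a$, the relation $v_{a+j}\in u_j+\langle v_a,u_1,\ldots,u_{j-1}\rangle$ (a consequence of \eqref{eq: def u}, proved by induction on $j$) lets me rewrite the wedge as $v_b\wedge\cdots\wedge v_a\wedge u_1\wedge\cdots\wedge u_{b+k-1-a}$, which is a consecutive minor of $V_2$ at positions $b,\ldots,b+k-1$. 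When the window wraps around, writing $b=n-k+2+t$ with $0\le t\le k-2$, Lemma \ref{lem: u wedge}(b) replaces $v_{n-k+t+2}\wedge\cdots\wedge v_n$ by a nonzero scalar multiple of $u_{t+1}\wedge\cdots\wedge u_{k-2}\wedge v_n$, converting the minor into $\tfrac{\Delta_{I(a,t+1)}(V_1)}{\Delta_{I(a,k-1)}(V_1)}$ times the cyclically consecutive minor of $V_2$ on positions $\{a+t+1,\ldots,a+k-1,1,\ldots,t+1\}$, again nonzero. The identities $\Phi_a\circ\Psi=\mathrm{id}$ and $\Psi\circ\Phi_a=\mathrm{id}$ then follow directly from the construction, and both maps are morphisms because all formulas are rational with nonvanishing denominators.

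The principal technical obstacle is the wrap-around case in the last step: one must track scalar factors coming from Lemma \ref{lem: u wedge}(b) and correctly match the rearranged wedge against the cyclic numbering of $V_2$. The other two cases and the construction of $\Psi$ itself are essentially formal consequences of Lemmas \ref{lem: Cramer} and \ref{lem: u wedge} together with the positroid conditions on the factors.
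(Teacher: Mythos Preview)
Your proposal is correct and follows essentially the same approach as the paper: construct the inverse by fixing a representative of $V_1$, building $u_1,\ldots,u_{k-2}$ from it via \eqref{eq: def u}, aligning $V_2$ by the unique $GL_k$ move matching its last $k$ columns to $(v_a,u_1,\ldots,u_{k-2},v_n)$, and then checking well-definedness under row operations and membership in $U_a$. The paper is terser on the verification that $V\in\Pi_{k,n}^{\circ}$ (it simply points back to the proof of Lemma~\ref{lem: V1 V2 positroids}), whereas you spell out the three cases explicitly, but the content is the same.
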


\begin{proof}
By Lemma \ref{lem: V1 V2 positroids} the map $\Phi_a:U_a\to Pi_{k,n-a+1}^{\circ}\times \Pi_{k,a+k-1}^{\circ}$ is well defined.
We need to construct the inverse map, reconstructing $V$ from $V_1$ and $V_2$. Since $V_1$ and $V_2$ are both defined up to row operations, we need to choose appropriate representatives in their equivalence classes and make sure that they glue correctly to $V$.

For $V_1$, choose a representative in the equivalence class arbitrarily and label the column vectors by $(v_a,\ldots,v_n)$. Since $V_1\in \Pi_{k,n-a+1}^{\circ}$, we have $\Delta_{I(a,i)}\neq 0$.
By Lemma \ref{lem: Cramer}, we can define the vectors $u_1,\ldots,u_{k-2}$ by \eqref{eq: def u}. Applying row operations to $V_1$ is equivalent to the multiplication by an invertible $(k\times k)$ matrix $A$ on the left. It transforms $v_i$ to $Av_i$, multiplies all the minors of $V_1$ by $\det A$, and transforms $u_i$ to
\begin{equation}
\label{eq: u row operations}
u_i\to Av_{a+i}-\sum_{j=0}^{i-1}\frac{\Delta_{I'(a,j,i)}\det(A)}{\Delta_{I(a,i)}\det(A)}(Av_{a+j})=A\left[v_{a+i}-\sum_{j=0}^{i-1}\frac{\Delta_{I'(a,j,i)}}{\Delta_{I(a,i)}}v_{a+j}\right]=Au_i.
\end{equation}

By Lemma \ref{lem: u wedge}(a) we get 
$v_a\wedge u_1\cdots u_{k-2}\wedge v_{n}=v_{a}\wedge v_{a+1}\cdots v_{a+k-2} \wedge v_n$. This is nonzero since $V_1\in \Pi_{k,n-a+1}^{\circ}$, so the vectors $v_a,u_1,\ldots,u_{k-2},v_n$ form a basis of $\C^k$. Therefore we can uniquely find a representative for $V_2$ 
of the form $V_2=(v_1,\ldots,v_{a-1},v_a,u_1,\ldots,u_{k-2},v_n)$. Indeed, if $V'_2=(v_1',\ldots,v'_{a+k-1})$ is some other representative then
$$
V_2=(v_a,u_1,\ldots,u_{k-2},v_n)(v'_{a},\ldots,v'_{a+k-1})^{-1}V'_2.
$$
By \eqref{eq: u row operations}, row operations $V_1\mapsto AV_1$ also change $V_2\to AV_2$. Now we can define
$V=(v_1,\ldots,v_{a-1},v_a,\ldots,v_n)$ where the vectors $v_1,\ldots,v_{a-1}$ are the first $(a-1)$ columns of $V_2$ and $(v_a,\ldots,v_n)=V_1$. By the above, this is well defined up to row operations.

Similarly to the proof of Lemma \ref{lem: V1 V2 positroids}, one can check that $V\in \Pi_{k,n}^{\circ}$, and $V_1\in \Pi_{k,n-a+1}^{\circ}$ immediately implies that $V\in U_a$. This completes the proof.
\end{proof}

\section{Cluster algebra interpretation}
\label{sec: cluster}

 We would like to compare the quivers and cluster coordinates \eqref{eq: scott} for the matrices $V$, $V_1$ and $V_2$, which we denote by $Q_V,Q_{V_1}$ and $Q_{V_2}$.
 By construction, the empty rectangle in both $Q_V$ and $Q_{V_1}$ corresponds to $\Delta_{n-k+1,\ldots,n}(V)$. On the other hand, by Lemma \ref{lem: u wedge}(a) the empty rectangle in $Q_{V_2}$ corresponds to the minor
 $$
 \Delta_{I(a,k-1)}(V_2)=v_a\wedge u_1\wedge \cdots u_{k-2}\wedge v_n=v_{a}\wedge v_{a+1}\wedge \cdots v_{a+k-2}\wedge v_n=\Delta_{I(a,k-1)}(V)=\Delta_{I(1,k-1)}(V_1)
 $$
 which is connected to $\Delta_{a-1,1}(V_2)$.
 
Clearly, the open subset $U_a\subset \Pi^{\circ}_{k,n}$ is defined by freezing the cluster variables $\Delta_{I(a,i)}(V)$ in $Q_V$, which are identified with $\Delta_{I(1,i)}(V_1)$. 
We need to analyze the behavior of all other minors in $Q_V$ under $\Phi_a$.

\begin{lemma}
\label{lem: new minors}
a) If $b\ge a$ then $\Delta_{I(b,i)}(V)=\Delta_{I(b-a+1,i)}(V_1)$.

b) If $b<a$ then 
$$\frac{\Delta_{I(a,k-1)}}{\Delta_{I(a,i)}}\Delta_{I(b,i)}(V)=\Delta_{I(b,i)}(V_2).$$
\end{lemma}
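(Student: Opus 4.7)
\textbf{Proof plan for Lemma \ref{lem: new minors}.} Both statements are computations of Plücker coordinates obtained by writing them as wedge products of the specified columns and then applying Lemma \ref{lem: u wedge}.

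For part (a), I would simply unpack the relabeling of columns. By construction, column $c$ of $V_1$ is $v_{a+c-1}$, so the index set $I(b-a+1,i) = \{b-a+1, \ldots, b-a+i, (n-a+1)-k+i+1, \ldots, n-a+1\}$, read off in $V_1$, picks out exactly the columns $v_b, \ldots, v_{b+i-1}, v_{n-k+i+1}, \ldots, v_n$ of $V$. These are the columns indexed by $I(b,i)$, in the same order, so the two determinants agree.

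For part (b), I would write $\Delta_{I(b,i)}(V_2)$ as a wedge product of the columns of $V_2=(v_1,\ldots,v_a,u_1,\ldots,u_{k-2},v_n)$ indexed (in $V_2$) by $I(b,i)=\{b,\ldots,b+i-1,a+i,\ldots,a+k-1\}$. The ``large'' block $a+i,\ldots,a+k-1$ always produces the tail $u_i\wedge u_{i+1}\wedge\cdots\wedge u_{k-2}\wedge v_n$, while the ``small'' block $b,\ldots,b+i-1$ produces $v_b,\ldots,v_a$ followed by $u_1,\ldots,u_{b+i-1-a}$ whenever $b+i-1>a$ (otherwise just $v_b,\ldots,v_{b+i-1}$). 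Because $b<a$, the early $u$'s appearing in the small block have index at most $i-2$, so there is no overlap with the large block.

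Next I would collapse the early $u$'s using Lemma \ref{lem: u wedge}(a): the segment $v_a\wedge u_1\wedge\cdots\wedge u_{b+i-1-a}$ equals $v_a\wedge v_{a+1}\wedge\cdots\wedge v_{b+i-1}$, so the entire small block reduces to $v_b\wedge v_{b+1}\wedge\cdots\wedge v_{b+i-1}$ in both cases. Then Lemma \ref{lem: u wedge}(b) applied to the tail gives
$$
u_i\wedge\cdots\wedge u_{k-2}\wedge v_n = \frac{\Delta_{I(a,k-1)}}{\Delta_{I(a,i)}}\, v_{n-k+i+1}\wedge\cdots\wedge v_n.
$$
Multiplying the two simplifications yields $\frac{\Delta_{I(a,k-1)}}{\Delta_{I(a,i)}}\, v_b\wedge\cdots\wedge v_{b+i-1}\wedge v_{n-k+i+1}\wedge\cdots\wedge v_n = \frac{\Delta_{I(a,k-1)}}{\Delta_{I(a,i)}}\Delta_{I(b,i)}(V)$, as claimed.

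The only place that requires any care is the bookkeeping in the ``mixed'' subcase $b+i-1>a$ of part (b): one must verify that the indices of the early $u$'s are strictly less than $i$ (so that the two $u$-blocks do not overlap) and that the increasing order of $I(b,i)$ on the $V_2$-side is compatible with the ordering of the wedge factors used above. Both verifications are immediate from $b\le a-1$ and $i\le k-1$, so no sign issues arise, and this is the only mild obstacle in an otherwise direct computation.
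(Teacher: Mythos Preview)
Your proposal is correct and follows essentially the same argument as the paper: part (a) is immediate from the column relabeling in $V_1$, and part (b) is handled by splitting into the two subcases $b+i-1>a$ and $b+i-1\le a$, then applying Lemma~\ref{lem: u wedge}(a) to collapse the early $u$'s and Lemma~\ref{lem: u wedge}(b) to the tail $u_i\wedge\cdots\wedge u_{k-2}\wedge v_n$. The paper's proof is identical in structure (with the cosmetic difference that it puts the boundary case $b+i-1=a$ in the other subcase), and your remark about the absence of sign issues is a welcome clarification.
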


\begin{proof}
Part (a) is clear from \eqref{eq: V1 V2}. For part (b), we first assume $b+i-1\ge a$ and write 
$$
\Delta_{I(b,i)}(V_2)=v_{b}\wedge \cdots v_{a}\wedge (u_1\wedge \cdots \wedge u_{i-(a-b+1)})\wedge (u_{i}\wedge \cdots \wedge u_{k-2}\wedge v_n).
$$
By Lemma \ref{lem: u wedge} we get
$$
v_a\wedge u_1\wedge \cdots \wedge u_{i-(a-b+1)}=v_a\wedge v_{a+1}\wedge \cdots \wedge v_{b+i-1}
$$
and
$$
u_{i}\wedge \cdots \wedge u_{k-2}\wedge v_n=\frac{\Delta_{I(a,k-1)}}{\Delta_{I(a,i)}}v_{n-k+i+1}\wedge\cdots \wedge v_{n-1}\wedge v_{n},
$$
so
$$
\Delta_{I(b,i)}(V_2)=\frac{\Delta_{I(a,k-1)}}{\Delta_{I(a,i)}}(v_b\wedge \cdots v_{b+i-1})\wedge
(v_{n-k+i+1}\wedge\cdots \wedge v_{n-1}\wedge v_{n})=
\frac{\Delta_{I(a,k-1)}}{\Delta_{I(a,i)}}\Delta_{I(b,i)}(V)
$$
Similarly, if $b+i-1<a$ then 
$$
\Delta_{I(b,i)}(V_2)=v_{b}\wedge \cdots v_{b+i-1}\wedge (u_{i}\wedge \cdots \wedge u_{k-2}\wedge v_n)=
$$
$$
\frac{\Delta_{I(a,k-1)}}{\Delta_{I(a,i)}}(v_b\wedge \cdots v_{b+i-1})\wedge
(v_{n-k+i+1}\wedge\cdots \wedge v_{n-1}\wedge v_{n})=
\frac{\Delta_{I(a,k-1)}}{\Delta_{I(a,i)}}\Delta_{I(b,i)}(V)
$$
\end{proof}

%For $k=3,n=8$, we analyze all possible options for freezing a column.

%\begin{example}
%\label{ex: 3 8 a=1}

%\end{example}

%\begin{example}
%\label{ex: 3 8 a=2}

%\end{example}

\begin{example}
\label{ex: 3 8 a=3}
For $a=3$, $k=3,n=8$ we get $\Delta_{378},\Delta_{348}\neq 0$, as in Example \ref{ex: 3 8 V1V2}. We have $V_1=(v_3,v_4,v_5,v_6,v_7,v_8)$ and $V_2=(v_1,v_2,v_3,u,v_8)$. The quiver $Q_V$ after freezing $\Delta_{378}$ and $\Delta_{348}$ has the form:

$$
\begin{tikzcd}
\boxed{\Delta_{678}} \arrow{dr}& & & & & \\
 & \Delta_{578} \arrow{d}\arrow{r}& \Delta_{478} \arrow{d}\arrow{r}& \boxed{\Delta_{378}} \arrow{d}\arrow{r} & \Delta_{278} \arrow{d}\arrow{r} & \boxed{\Delta_{178}}\\
  & \Delta_{568} \arrow{d}\arrow{r}& \Delta_{458} \arrow{d}\arrow{r}\arrow{ul}& \boxed{\Delta_{348}} \arrow{d}\arrow{r}\arrow{ul}& \Delta_{238} \arrow{d}\arrow{r}\arrow{ul}& \boxed{\Delta_{128}}\arrow{ul}\\
   & \boxed{\Delta_{567}} & \boxed{\Delta_{456}} \arrow{ul}& \boxed{\Delta_{345}} \arrow{ul}& \boxed{\Delta_{234}} \arrow{ul}& \boxed{\Delta_{123}}\arrow{ul}\\
\end{tikzcd}
$$
while the quivers $Q_{V_1}$ and $Q_{V_2}$ have the form

$$
\begin{tikzcd}
\boxed{\Delta_{678}} \arrow{dr}& & & & & \\
 & \Delta_{578} \arrow{d}\arrow{r}& \Delta_{478} \arrow{d}\arrow{r}& \boxed{\Delta_{378}} \arrow{d} & \Delta_{2u8} \arrow{d}\arrow{r} & \boxed{\Delta_{1u8}}\\
  & \Delta_{568} \arrow{d}\arrow{r}& \Delta_{458} \arrow{d}\arrow{r}\arrow{ul}& \boxed{\Delta_{348}} \arrow{d}\arrow{ul} \arrow{ur}& \Delta_{238} \arrow{d}\arrow{r}& \boxed{\Delta_{128}}\arrow{ul}\\
   & \boxed{\Delta_{567}} & \boxed{\Delta_{456}} \arrow{ul}& \boxed{\Delta_{345}} \arrow{ul}& \boxed{\Delta_{23u}} & \boxed{\Delta_{123}}\arrow{ul}\\
\end{tikzcd}
$$
Note that we identified $
\Delta_{3u8}=\Delta_{348}$.
We claim that the two cluster structures are related by a  quasi-equivalence. Indeed, 
$$
\Delta_{3u8}=\Delta_{348},\
\Delta_{23u}=\Delta_{234},\
\Delta_{2u8}=\alpha\Delta_{278},\ \Delta_{1u8}=\alpha\Delta_{178}\ \mathrm{where}\  \alpha=\frac{\Delta_{348}}{\Delta_{378}},
$$
and all other cluster variables are unchanged. Therefore all cluster variables are the same up to monomials in frozen. We need to check the exchange ratios:
$$
y_{2u8}(V_2)=\frac{\Delta_{348}\Delta_{128}}{\Delta_{1u8}\Delta_{238}}=\alpha^{-1}\frac{\Delta_{128}\Delta_{348}}{\Delta_{178}\Delta_{238}}=\frac{\Delta_{128}\Delta_{378}}{\Delta_{178}\Delta_{238}}=y_{278}(V).
$$
while
$$
y_{238}(V_2)=\frac{\Delta_{2u8}\Delta_{123}}{\Delta_{128}\Delta_{23u}}=\alpha\frac{\Delta_{278}\Delta_{123}}{\Delta_{128}\Delta_{234}}=\frac{\Delta_{278}\Delta_{123}\Delta_{348}}{\Delta_{128}\Delta_{234}\Delta_{378}}=y_{238}(V).
$$
Since the exchange ratios agree, we indeed get a quasi-equivalence.

 \end{example}

We are ready to state and prove our main result.

\begin{theorem}
The map $\Phi_a: V\mapsto (V_1,V_2)$ defined by \eqref{eq: V1 V2} is a cluster quasi-isomorphism between $\widehat{U_a}\subset \widehat{\Pi}_{k,n}^{\circ}$ and the product $\widehat{\Pi}_{k,n-a+1}^{\circ}\times \Pi_{k,a+k-1}^{\circ}$.
\end{theorem}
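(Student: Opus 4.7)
The plan is to verify directly the conditions of Definition~\ref{def: quasi} between the rectangles seed on $\widehat{U_a}$ (obtained from $\Sigma_{k,n}$ by freezing the cluster variables $\Delta_{I(a,i)}$ for $1\le i\le k-1$) and the product seed $\Sigma_{k,n-a+1}\boxtimes\Sigma_{k,a+k-1}$ on $\widehat{\Pi}_{k,n-a+1}^{\circ}\times\widehat{\Pi}_{k,a+k-1}^{\circ}$. By the main theorem of \cite{F}, quasi-equivalence at a single pair of seeds propagates through every mutation, so this single check suffices.

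Lemma~\ref{lem: new minors} supplies the dictionary between cluster variables. The variables $\Delta_{I(b,i)}(V)$ with $b\ge a$ are identified with $\Delta_{I(b-a+1,i)}(V_1)$ on the nose, including the empty-rectangle frozen $\Delta_{n-k+1,\ldots,n}(V)$. The variables $\Delta_{I(b,i)}(V)$ with $b<a$ match $\Delta_{I(b,i)}(V_2)$ up to the Laurent monomial $\Delta_{I(a,k-1)}(V)/\Delta_{I(a,i)}(V)$ in our new frozens; the empty-rectangle frozen of $Q_{V_2}$ equals $\Delta_{I(a,k-1)}(V)$ itself, and the bottom-row frozens of $Q_{V_2}$ are cyclically consecutive minors of $V_2$ handled by Lemma~\ref{lem: V1 V2 positroids}. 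This establishes conditions (i) and (ii) of Definition~\ref{def: quasi}.

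For condition (iii), each mutable vertex in the rectangles quiver has three outgoing and three incoming arrows by~\eqref{eq: scott}, giving an exchange ratio with three factors on each side. For vertices $(b,i)$ with $b\ge a+1$, all six neighbors lie in the $V_1$-half and are identified by equalities, so the exchange ratios coincide verbatim (noting that $\Delta_{I(a,i)}(V)=\Delta_{I(1,i)}(V_1)$ is frozen on both sides). For $b\le a-2$, all six neighbors lie in the $V_2$-half, and a short bookkeeping shows that the rescaling factors cancel: $\Delta_{I(a,k-1)}$ appears three times in the numerator and three times in the denominator of the $V_2$-ratio, while each of $\Delta_{I(a,i-1)},\Delta_{I(a,i)},\Delta_{I(a,i+1)}$ appears exactly once on each side.

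The main obstacle is the boundary column $b=a-1$, where $\Delta_{I(a-1,i)}(V)$ has two neighbors in the now-frozen column $a$ --- the incoming $\Delta_{I(a,i)}$ and the outgoing $\Delta_{I(a,i-1)}$ coming from the up-left arrow --- while the corresponding vertex in $Q_{V_2}$ sits in the leftmost column and has these two neighbors absent. For $2\le i\le k-1$, expanding the ratios with the rescaling factors $c_j:=\Delta_{I(a,k-1)}/\Delta_{I(a,j)}$ yields an overall prefactor $c_i/c_{i-1}=\Delta_{I(a,i-1)}/\Delta_{I(a,i)}$, which is precisely the contribution of the two missing frozen neighbors to the $V$-ratio. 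For $i=1$ the up-left arrow is absent even in $Q_V$, and the additional arrow $\Delta_{I(a,k-1)}(V)\to\Delta_{I(a-1,1)}(V_2)$ from the empty rectangle of $Q_{V_2}$ (Theorem~\ref{thm: scott}(3) applied to $V_2$) enters the denominator of the $V_2$-ratio and supplies the compensating factor. The necessary bookkeeping is carried out explicitly for $(k,n,a)=(3,8,3)$ in Example~\ref{ex: 3 8 a=3}, and the general case proceeds identically; combined with~\cite{F}, this completes the proof.
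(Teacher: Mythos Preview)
Your proof is correct and follows essentially the same approach as the paper's own argument: both invoke Lemma~\ref{lem: new minors} to verify the first two conditions of Definition~\ref{def: quasi}, then check the exchange ratios case by case (interior of the $V_2$-half, top row, the boundary column $b=a-1$, and the corner $b=a-1,\,i=1$), relying on \cite{F} to propagate the quasi-equivalence to all seeds. Your case organization differs only cosmetically---you separate out $b\ge a+1$ explicitly, while the paper treats the $V_1$-half as trivially handled and concentrates on $b<a$---and your exchange-ratio convention appears to be the reciprocal of the paper's, but the computations are otherwise line-for-line the same.
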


\begin{proof} 
By Lemma \ref{lem: new minors}(a) all Scott minors $\Delta_{I(b,i)}(V_1)$ are the same as the minors in the left half of $Q_V$. 

We need to analyze the right half of $Q_V$.
By Lemma \ref{lem: new minors}(b) all minors in the right half are multiplied by some monomials in $\Delta_{I(a,i)}$ which are frozen on $U_a$. It remains to compute the exchange ratios. We have the following cases:

(a) Interior: $b<a$, $i>1$. The piece of the quiver $Q_V$ around $\Delta_{I(b,i)}$ has the form
$$
\begin{tikzcd}
\Delta_{I(b+1,i-1)} \arrow{r}\arrow{d} & \Delta_{I(b,i-1)} \arrow{r}  \arrow{d} & \Delta_{I(b-1,i-1)} \arrow{d}\\
\Delta_{I(b+1,i)} \arrow{r}\arrow{d}\ & \Delta_{I(b,i)} \arrow{d}\arrow{r}\arrow{ul} & \Delta_{I(b-1,i)}\arrow{d} \arrow{ul}\\
\Delta_{I(b+1,i+1)}\arrow{r} & \Delta_{I(b,i+1)}\arrow{r} \arrow{ul}& \Delta_{I(b-1,i+1)}\arrow{ul}
\end{tikzcd}
$$
The exchange ratios are equal to
$$
y_{I(b,i)}=\frac{\Delta_{I(b,i-1)}\Delta_{I(b+1,i)}\Delta_{I(b-1,i+1)}}{\Delta_{I(b-1,i)}\Delta_{I(b,i+1)}\Delta_{I(b+1,i-1)}}
$$
so by Lemma \ref{lem: new minors} we get
$$
\frac{y_{I(b,i)}(V)}{y_{I(b,i)}(V_2)}=\frac{\Delta_{I(a,i-1)}\Delta_{I(a,i)}\Delta_{I(a,i+1)}}{\Delta_{I(a,i)}\Delta_{I(a,i+1)}\Delta_{I(a,i-1)}}=1.
$$
and $y_{I(b,i)}(V)=y_{I(b,i)}(V_2)$. Note that $\Delta_{I(a,k-1)}$ cancels out.

(b) Top boundary $i=1$:
$$
\begin{tikzcd}
\Delta_{I(b+1,1)} \arrow{r}\arrow{d} & \Delta_{I(b,1)} \arrow{r}  \arrow{d} & \Delta_{I(b-1,1)} \arrow{d}\\
\Delta_{I(b+1,2)} \arrow{r}\ & \Delta_{I(b,2)} \arrow{r}\arrow{ul} & \Delta_{I(b-1,2)} \arrow{ul}
\end{tikzcd}
$$
The exchange ratios are equal to
$$
y_{I(b,1)}=\frac{\Delta_{I(b+1,1)}\Delta_{I(b-1,2)}}{\Delta_{I(b-1,1)}\Delta_{I(b,2)}}
$$
so by Lemma \ref{lem: new minors} we get
$$
\frac{y_{I(b,1)}(V)}{y_{I(b,1)}(V_2)}=\frac{\Delta_{I(a,1)}\Delta_{I(a,2)}}{\Delta_{I(a,1)}\Delta_{I(a,2)}}=1.
$$
and $y_{I(b,i)}(V)=y_{I(b,i)}(V_2)$. Note that $\Delta_{I(a,k-1)}$ cancels again.

(c) Left boundary, $b=a-1$:
$$
\begin{tikzcd}
\Delta_{I(a-1,i-1)} \arrow{r}\arrow{d} & \Delta_{I(a-2,i-1)}    \arrow{d}  \\
\Delta_{I(a-1,i)} \arrow{r}\arrow{d}\ & \Delta_{I(a-2,i)} \arrow{d} \arrow{ul}  \\
\Delta_{I(a-1,i+1)}\arrow{r} & \Delta_{I(a-2,i+1)}  \arrow{ul}\\
\end{tikzcd}
$$
The exchange ratios are equal to
$$
y_{I(a-1,i)}(V_2)=\frac{\Delta_{I(a-1,i-1)}(V_2)\Delta_{I(a-2,i+1)}(V_2)}{\Delta_{I(a-2,i)}(V_2)\Delta_{I(a-1,i+1)}(V_2)}
$$
so by Lemma \ref{lem: new minors} we get
$$
y_{I(a-1,i)}(V_2)=\left[\frac{\Delta_{I(a,i-1)}(V)\Delta_{I(a,i+1)}(V)}{\Delta_{I(a,i)}(V)\Delta_{I(a,i+1)}(V)}\right]^{-1}\cdot \frac{\Delta_{I(a-1,i-1)}(V)\Delta_{I(a-2,i+1)}(V)}{\Delta_{I(a-2,i)}(V)\Delta_{I(a-1,i+1)}(V)}=
$$
$$
\frac{\Delta_{I(a,i)}(V)\Delta_{I(a-1,i-1)}(V)\Delta_{I(a-2,i+1)}(V)}{\Delta_{I(a,i-1)}(V)\Delta_{I(a-2,i)}(V)\Delta_{I(a-1,i+1)}(V)}=y_{I(a-1,i)}(V).
$$
(d) Corner, $b=a-1,i=1$:
$$
\begin{tikzcd}
& \Delta_{I(a-1,1)} \arrow{r}\arrow{d} & \Delta_{I(a-2,1)}    \arrow{d}  \\
\boxed{\Delta_{I(a,k-1)}}\arrow{ur}& \Delta_{I(a-1,2)} \arrow{r} \ & \Delta_{I(a-2,2)} \arrow{ul}  
\end{tikzcd}
$$
Here we identify $\Delta_{I(a,k-1)}(V_2)$ with 
$\Delta_{I(a,k-1)}(V)=\Delta_{I(1,k-1)}(V_1)$ as above.
The exchange ratio is equal to
$$
y_{I(a-1,1)}(V_2)=\frac{\Delta_{I(a,k-1)}(V)\Delta_{I(a-2,2)}(V_2)}{\Delta_{I(a-2,1)}(V_2)\Delta_{I(a-1,2)}(V_2)}
$$
so by Lemma \ref{lem: new minors} we get
$$
\Delta_{I(a,k-1)}(V)\cdot \frac{\Delta_{I(a-2,2)}(V)\Delta_{I(a,k-1)}(V)}{\Delta_{I(a,2)}(V)}\cdot \frac{\Delta_{I(a,1)}(V)}{\Delta_{I(a-2,1)}(V)\Delta_{I(a,k-1)}(V)}\cdot 
\frac{\Delta_{I(a,2)}(V)}{\Delta_{I(a-1,2)}(V)\Delta_{I(a,k-1)}(V)}=
$$
$$
\frac{\Delta_{I(a-2,2)}(V)\Delta_{I(a,1)}(V)}{\Delta_{I(a-2,1)}(V)\Delta_{I(a-1,2)}(V)}=y_{I(a-1,1)}(V).
$$
\end{proof}

%This would be the main theorem:

%Freezing a column of minors  = splitting into two Scott-type quivers

%No change on the left, quasi-equivalence on the right. 

\section{Relation to braid varieties}
\label{sec: braids}

In this section we describe the map $\Phi$ in terms of braid varieties. We refer to \cite{CGGS1,CGGS2,CGGLSS,GLSBS} for more information and context and braid varieties, and only use some basic definitions. We will work on the variety of complete flags 
$$
\mathrm{Fl}_k=\{0=\CF_0\subset \CF_1\cdots \subset \CF_{k}=\C^k\},\ \dim \CF_i=i. 
$$

\begin{definition}
We say that two flags $\CF$ and $\CF'$ are in position $s_i$ if $\CF_j=\CF'_j$ for $j\neq i$ and $\CF_i\neq \CF'_i$. We will denote this by $\CF\xrightarrow{s_i}\CF'$.

We say that $\CF$ and $\CF'$ are in position $w_0$ if $\CF_{i}\oplus \CF'_{n-i}=\C^k$, in other words $\CF_i$ is transversal to $\CF'_{n-i}$ for all $i$.
\end{definition}

\begin{definition}
Given a braid $\beta=\sigma_{i_1}\cdots\sigma_{i_\ell}$, we define the braid variety as the space of sequences of flags
$$
\CF^{(0)}\xrightarrow{s_{i_1}}\CF^{(1)}\cdots \CF^{(\ell-1)}\xrightarrow{s_{i_\ell}}\CF_{(\ell)}
$$
such that $\CF^{(0)}$ is the standard flag and $\CF^{(\ell)}$ is the antistandard flag in $\C^k$:
$$
\CF^{(0)}_i=\langle e_1,\ldots,e_i\rangle,\ \CF^{(\ell)}_i=\langle e_{j-i+1},\ldots,e_k\rangle.
$$
\end{definition}
We will visualize the flags $\CF^{(j)}$ by labeling the regions in the braid diagram for $\beta$ by vector spaces such that each vertical cross-section provides a complete flag, see Figure \ref{fig:braid cut 3,8}. We recall an explicit construction \cite[Section 4]{CasalsGao} relating $\Pi_{k,n}^{\circ}$ to braid variety $X(\beta_{k,n})$ where 
$$\beta_{k,n}=(\sigma_1\dots\sigma_{k-1})^{n-k}(\sigma_1\dots\sigma_{k-1})\dots(\sigma_2\sigma_1)\sigma_1=(\sigma_1\dots\sigma_{k-1})^{n-k}w_0$$ (see also \cite{STWZ}). Here $T(k,n-k)=(\sigma_1\dots\sigma_{k-1})^{n-k}$ is the $(k,n-k)$ torus braid and $\sigma_1(\sigma_2\sigma_1)\cdots (\sigma_{k-1}\cdots \sigma_1)$ is the specific braid word for the half-twist braid.

%\textcolor{red}{Need to discuss. Is it true that $\Pi_{k,n}^o\simeq X(\beta)$ where $\beta=(\sigma_1\dots\sigma_{k-1})^{n-1}$? If so, what is a good reference? Confused about association of Pl\"uckers and chambers of the braid. }

Given a matrix $V=(v_1,\ldots,v_n)$, we can fill in the bottom row of the braid diagram for $\beta_{k,n}$ by the vectors $v_1,\ldots,v_n$. This uniquely determines the subspaces for all other regions as spans $\langle v_i,\ldots,v_j\rangle$ for appropriate $i,j$, see Figures \ref{fig:braid cut 3,8} and \ref{fig: splice braid}. The conditions $\Delta_{I(a,k)}(V)\neq 0$ are equivalent to the relative position conditions for each crossing of $\beta$. The conditions $\Delta_{I(1,i)}(V)\neq 0$ are equivalent to the fact that two flags
$$
\CF^{(0)}=\{0\subset \langle v_1\rangle \subset \langle v_1,v_2\rangle \subset\ldots \langle v_1,\ldots,v_{k}\rangle\}
$$
and 
$$
\CF^{(N)}=\{0\subset \langle v_n\rangle \subset \langle v_{n-1},v_n\rangle \subset\ldots \langle v_{n-k+1},\ldots,v_{n}\rangle\}
$$
are in position $w_0$. Therefore there is a unique matrix $M$ such that $M\CF^{(0)}$ is the standard flag and $M\CF^{(N)}$ is the antistandard flag.

Finally, the flags constructed as above determine the vectors $v_i$ only up to scalars. This can be fixed either by rescaling $v_i$, or by considering framed flags as in \cite{CasalsGao}. As a result, we obtain the following.

\begin{theorem}[\cite{CasalsGao,STWZ}]
Let $\Pi_{k,n}^{\circ,1}$ be the subset of $\Pi_{k,n}^{\circ}$ defined by 
$$
\Delta_{b,b+1,\ldots,b+k-1}=\Delta_{I(b,k)}=1.
$$
Then $X(\beta_{k,n})\simeq \Pi_{k,n}^{\circ,1}$.
\end{theorem}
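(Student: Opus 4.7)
The plan is to construct mutually inverse regular maps between $\Pi_{k,n}^{\circ,1}$ and $X(\beta_{k,n})$ directly from the recipe sketched in the paragraph preceding the theorem. Given $V=(v_1,\ldots,v_n)\in \Pi_{k,n}^{\circ,1}$, I would label every region of the braid diagram for $\beta_{k,n}$ by a subspace of $\C^k$, as illustrated in Figures \ref{fig: 5 10 intro} and \ref{fig:braid cut 3,8}: the bottom edge carries the columns $v_1,\ldots,v_n$ themselves, and each region above it is spanned by the columns indexing the strands emerging below it. Reading off the vertical cross-sections yields a sequence of complete flags in $\C^k$, which after a suitable global change of basis will give a point of $X(\beta_{k,n})$.

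The central verification is that at every crossing $\sigma_{i_j}$ of $\beta_{k,n}$ the two adjacent flags are in relative position $s_{i_j}$. For the torus-braid portion $(\sigma_1\cdots\sigma_{k-1})^{n-k}$ this reduces at each crossing to the nonvanishing of a single consecutive Pl\"ucker coordinate $\Delta_{a,a+1,\ldots,a+k-1}(V)$, which is built into the definition of $\Pi_{k,n}^\circ$. For the half-twist portion $w_0=\sigma_1(\sigma_2\sigma_1)\cdots(\sigma_{k-1}\cdots\sigma_1)$, I would argue by induction on the row of the diagram that the intermediate regions interpolate between the left flag whose $i$-th subspace is $\langle v_1,\ldots,v_i\rangle$ and the right flag whose $i$-th subspace is $\langle v_{n-i+1},\ldots,v_n\rangle$, with the $s_i$ position condition at each crossing reducing to another minor nonvanishing condition. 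To finally land in $X(\beta_{k,n})$, I would apply the unique $M\in \mathrm{GL}_k$ that sends the initial and final flags of this labeled diagram to the standard and antistandard flags; the existence and uniqueness of $M$ use that these two outer flags are in position $w_0$, which is an open condition holding on $\Pi_{k,n}^{\circ,1}$.

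For the inverse map, given a point of $X(\beta_{k,n})$ one reads $v_1,\ldots,v_n$ off the bottom row of the labeled diagram; the columns are determined up to individual scalar multiples, and the normalization $\Delta_{b,b+1,\ldots,b+k-1}(V)=1$ imposed on $\Pi_{k,n}^{\circ,1}$ pins these scalars down uniquely. The two maps are then inverse to each other by direct comparison of the data carried by a diagram and by its bottom row.

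The main obstacle is keeping track of the scalar rescalings consistently: the $n$ normalization conditions $\Delta_{b,\ldots,b+k-1}=1$ involve overlapping cyclic $k$-tuples of columns, so recovering each individual $v_i$ requires a careful inductive rescaling rather than a pointwise one. A second subtlety is the compatibility between the choice of $M$ and the framing of the $v_i$, which must be tracked through the half-twist portion of the diagram where the regions are no longer simple spans of consecutive columns. The technical details of this construction are carried out in \cite[Section 4]{CasalsGao} and in \cite{STWZ}, whose framework I would follow.
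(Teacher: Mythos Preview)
The paper does not supply its own proof of this theorem; it is stated as a cited result of \cite{CasalsGao,STWZ}, with only the construction sketched in the paragraphs preceding the statement. Your proposal expands that same sketch and, like the paper, defers the technical details to those references, so your approach matches what the paper does.

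One slip worth flagging: the half-twist $w_0$ sits at the \emph{right end} of $\beta_{k,n}=(\sigma_1\cdots\sigma_{k-1})^{n-k}w_0$, so its left boundary flag is the flag reached after traversing the torus braid portion, namely the flag whose $i$-th piece is $\langle v_{n-k+1},\ldots,v_{n-k+i}\rangle$, not $\CF^{(0)}$ with $i$-th piece $\langle v_1,\ldots,v_i\rangle$. With the correct boundary flags, the $s_i$ conditions at the half-twist crossings reduce to the single nonvanishing $\Delta_{n-k+1,\ldots,n}\neq 0$, since a reduced word for $w_0$ determines all intermediate flags uniquely once its two boundary flags are transverse. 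The conditions $\Delta_{I(1,i)}\neq 0$ that you mention are instead what guarantee that the \emph{global} endpoint flags $\CF^{(0)}$ and $\CF^{(N)}$ are in position $w_0$, which is what lets you choose the matrix $M$; this is exactly how the paper phrases it. Apart from this relabeling, your outline is consistent with the cited construction.
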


\begin{figure}[ht!]
    \centering
    \input{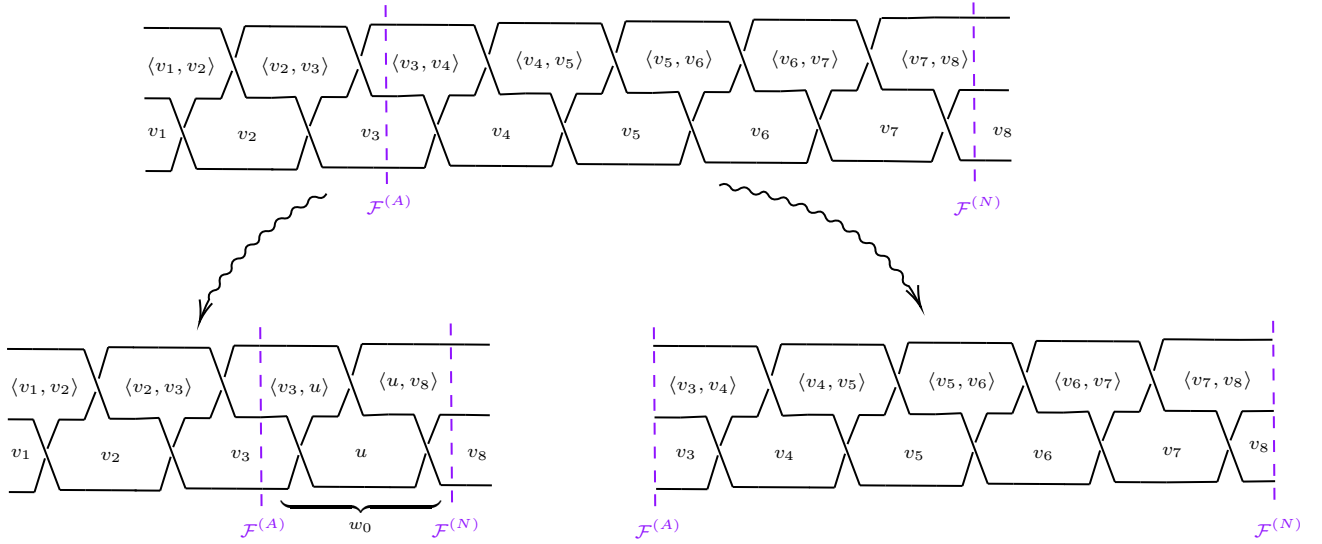}
    \caption{Freezing $\Delta_{348},\Delta_{378}$ in the braid associated to $X(\beta_{3,8})\simeq\Pi_{3,8}^{\circ,1}$.}
    \label{fig:braid cut 3,8}
\end{figure}

%\textcolor{red}{TODO: describe how we cut the braid, what is going on in the picture etc:
%$A=(a-1)(k-1)+$, $N=k(n-k)+\binom{n}{2}$ (should be $N=(n-k)(k-1)+\binom{k}{2}+1$ length of $w_0=\binom{k}{2}$, $k$ defines number of strands), cuts correspond to flags $\CF^{(A)}$ (check), $\CF^{(N)}$
%Claim: open subset $U_a$ says that these two flags are transversal
%$V_1$: all flags between them
%$V_2$: flags up to $\CF^{(A)}$, stitching them using $w_0$, braid $(\sigma_1\cdots \sigma_{k-1})^{a-1}w_0$.
%}

Let $\beta=(\sigma_1\dots\sigma_{k-1})^{n-k}(\sigma_1\dots\sigma_{k-1})\dots(\sigma_1\sigma_2)\sigma_1$ be the braid where $X(\beta)\simeq \Pi_{k,n}^{\circ,1}$. Then the process of freezing $\Delta_{I(a)}$ corresponds to severing the braid $\beta$ at flags $\CF^{(A)}$ and $\CF^{(N)}$ where $A=(a-1)(k-1)+1$ and $N=(n-k)(k-1)+\binom{k}{2}+1$. Upon severing the braid at the given flags we disassemble the braid into two separate braids decorated by the flags 
\begin{equation}
\label{eq: flag for V1}
\CF^{(A)}\xrightarrow{s_1}\CF^{(A+1)}\cdots\CF^{(N-1)}\xrightarrow{s_1}\CF^{(N)}
\end{equation}
and 
\begin{equation}
\label{eq: flag for V2}
\CF^{(0)}\xrightarrow{s_1}\CF^{(1)}\cdots\CF^{(A-1)}\xrightarrow{s_{k-1}}\CF^{(A)}\xrightarrow{s_1}\widetilde{\CF}^{(A+1)}\xrightarrow \cdots \widetilde{\CF}^{A+\binom{k}{2}-1}\xrightarrow{s_1}\CF^{(N)}.
\end{equation}
The first braid is decorated by the flags between $\CF^{(A)}$ and $\CF^{(N)}$ and is associated to $X(\beta_1)$ where $\beta_1=(\sigma_1\cdots \sigma_{k-1})^{n-k-a+1}w_0$. Note that the conditions defining the open subset $U_a$ guarantee that the flags $\CF^{(A)}$ and $\CF^{(N)}$ are in position $w_0$, so as above there is a unique matrix $M$ such that $M\CF^{(A)}$ is the standard flag and $M\CF^{(N)}$ is the antistandard flag.

Whereas for the second braid we splice together the flags $\CF^{(A)}$ and $\CF^{(N)}$ by interweaving the sequences of flags $\widetilde{\CF}$ associated to the half twist on $k$ strands, see Figure \ref{fig:braid cut 3,8} for an example of the  of the braid $\beta$ into its two separate components and Figure \ref{fig: splice braid} details the local splicing effect on the flags. This stitching of the half twist braid fills the bottom row with $k-2$ vectors $u_1,\dots, u_{k-2}$, and the intermediate flags $\widetilde{F}^{(A+j)}$ are uniquely determined by $\CF^{(A)}$ and $\CF^{(N)}$. Through this process the resulting braid is $\beta_2=(\sigma_1\cdots \sigma_{k-1})^{a-1}w_0$.

%\textcolor{red}{TO DO: Need to define a labeling for the decomposed flags.  Something about the choice of $A$ and how this guarantees that $w_0$ is included in the second braid - allowing for recovery of the smaller positroids. }
\begin{figure}
    \centering
    \tikzset{every picture/.style={line width=0.75pt}} %set default line width to 0.75pt        

\begin{tikzpicture}[x=0.75pt,y=0.75pt,yscale=-1,xscale=1]
%uncomment if require: \path (0,300); %set diagram left start at 0, and has height of 300

%Straight Lines [id:da43081404952437174] 
\draw    (120.13,170.42) -- (180,170.42) ;
%Straight Lines [id:da860114993418754] 
\draw    (120.13,140.42) -- (210.25,139.92) ;
%Straight Lines [id:da440518013726922] 
\draw    (120.13,110.17) -- (240.25,110.42) ;
%Straight Lines [id:da521301808874939] 
\draw    (120.13,200.42) -- (150,200.17) ;
%Straight Lines [id:da7314439986107959] 
\draw    (119.88,230.17) -- (150,230.17) ;
%Straight Lines [id:da18667523098749994] 
\draw    (150,200.17) -- (170.25,229.92) ;
%Straight Lines [id:da8363625573031155] 
\draw    (150,230.17) -- (158.75,217.17) ;
%Straight Lines [id:da7814998766989167] 
\draw    (161.75,213.17) -- (169.75,200.42) ;
%Straight Lines [id:da8305055603193974] 
\draw    (180,170.42) -- (200.25,200.17) ;
%Straight Lines [id:da6774550780463704] 
\draw    (180,200.42) -- (188.75,187.42) ;
%Straight Lines [id:da6162696947327795] 
\draw    (191.75,183.42) -- (199.75,170.67) ;
%Straight Lines [id:da44884704622762994] 
\draw    (210.25,139.92) -- (230.5,169.67) ;
%Straight Lines [id:da7507289769076497] 
\draw    (210,170.67) -- (219,156.92) ;
%Straight Lines [id:da5750670591362803] 
\draw    (222,152.92) -- (230,140.17) ;
%Straight Lines [id:da3420912754354959] 
\draw    (240.25,110.42) -- (260.5,140.17) ;
%Straight Lines [id:da6303815720809545] 
\draw    (240.25,140.42) -- (249,127.42) ;
%Straight Lines [id:da00664484093595008] 
\draw    (252,123.42) -- (260,110.67) ;
%Straight Lines [id:da12293138358217881] 
\draw    (270.25,200.17) -- (290.5,229.92) ;
%Straight Lines [id:da3531186658767105] 
\draw    (270.25,230.17) -- (279,217.17) ;
%Straight Lines [id:da21541171624363797] 
\draw    (282,213.17) -- (290,200.42) ;
%Straight Lines [id:da8592888424023577] 
\draw    (300.25,170.17) -- (320.5,199.92) ;
%Straight Lines [id:da9538275072722171] 
\draw    (300.25,200.42) -- (309,187.17) ;
%Straight Lines [id:da1188496582775056] 
\draw    (312,183.17) -- (320,170.42) ;
%Straight Lines [id:da7470305306011495] 
\draw    (330.25,140.42) -- (350.5,170.17) ;
%Straight Lines [id:da4724737792643683] 
\draw    (330.25,170.42) -- (339,157.42) ;
%Straight Lines [id:da820264230580249] 
\draw    (342,153.42) -- (350,140.67) ;
%Straight Lines [id:da09561466808043906] 
\draw    (360.5,199.92) -- (380.75,229.67) ;
%Straight Lines [id:da41865092057924946] 
\draw    (360.5,229.92) -- (369.25,216.92) ;
%Straight Lines [id:da8009826831322315] 
\draw    (372.25,212.92) -- (380.25,200.17) ;
%Straight Lines [id:da10675803389716676] 
\draw    (390.25,170.42) -- (410.5,200.17) ;
%Straight Lines [id:da2490164329830915] 
\draw    (390.25,200.42) -- (399,187.42) ;
%Straight Lines [id:da4756099419509221] 
\draw    (402,183.42) -- (410,170.67) ;
%Straight Lines [id:da7775015032357266] 
\draw    (419.75,200.17) -- (440,229.92) ;
%Straight Lines [id:da5359178484754723] 
\draw    (419.75,230.17) -- (428.5,217.17) ;
%Straight Lines [id:da7013910998171433] 
\draw    (431.5,213.17) -- (439.5,200.42) ;
%Straight Lines [id:da9341299252924127] 
\draw    (169.75,200.42) -- (180,200.42) ;
%Straight Lines [id:da7271773704657327] 
\draw    (199.75,170.67) -- (210,170.67) ;
%Straight Lines [id:da2577040327200544] 
\draw    (230,140.17) -- (240.25,140.42) ;
%Straight Lines [id:da2622275519447739] 
\draw    (290,200.42) -- (300.25,200.42) ;
%Straight Lines [id:da15448483353228415] 
\draw    (320,170.42) -- (330.25,170.42) ;
%Straight Lines [id:da23631163115163267] 
\draw    (380.25,200.17) -- (390.5,200.17) ;
%Straight Lines [id:da8665178576273245] 
\draw    (410.5,200.17) -- (419.75,200.17) ;
%Straight Lines [id:da41608868579923364] 
\draw    (170.25,229.92) -- (270.25,230.17) ;
%Straight Lines [id:da450686844124742] 
\draw    (200.25,200.17) -- (270.25,200.17) ;
%Straight Lines [id:da41636902114130936] 
\draw    (230.5,169.67) -- (300.25,170.17) ;
%Straight Lines [id:da11959811607656179] 
\draw    (260.5,140.17) -- (330.25,140.42) ;
%Straight Lines [id:da057556342319579956] 
\draw    (290.5,229.92) -- (360.5,229.92) ;
%Straight Lines [id:da48805180011896043] 
\draw    (320.5,199.92) -- (360.5,199.92) ;
%Straight Lines [id:da42242196465294546] 
\draw    (380.75,229.67) -- (419.75,230.17) ;
%Straight Lines [id:da7302288135841646] 
\draw    (350.5,170.17) -- (390.25,170.42) ;
%Straight Lines [id:da6440989444748026] 
\draw    (440,229.92) -- (470.25,229.92) ;
%Straight Lines [id:da9620238841063735] 
\draw    (439.5,200.42) -- (469.75,200.42) ;
%Straight Lines [id:da38502237710150444] 
\draw    (410,170.67) -- (470.25,170.42) ;
%Straight Lines [id:da7841016819134863] 
\draw    (350,140.67) -- (470,140.67) ;
%Straight Lines [id:da7657279874013829] 
\draw    (260,110.67) -- (470,110.42) ;

% Text Node
\draw (129.17,209.01) node [anchor=north west][inner sep=0.75pt]  [font=\footnotesize] [align=left] {$\displaystyle v_{3}$};
% Text Node
\draw (444.83,209.34) node [anchor=north west][inner sep=0.75pt]  [font=\footnotesize] [align=left] {$\displaystyle v_{10}$};
% Text Node
\draw (394.5,209.34) node [anchor=north west][inner sep=0.75pt]  [font=\footnotesize] [align=left] {$\displaystyle u_{3}$};
% Text Node
\draw (318.83,209.34) node [anchor=north west][inner sep=0.75pt]  [font=\footnotesize] [align=left] {$\displaystyle u_{2}$};
% Text Node
\draw (208.83,209.01) node [anchor=north west][inner sep=0.75pt]  [font=\footnotesize] [align=left] {$\displaystyle u_{1}$};
% Text Node
\draw (122.83,179.34) node [anchor=north west][inner sep=0.75pt]  [font=\footnotesize] [align=left] {$\displaystyle \langle v_{3} ,v_{4} \rangle $};
% Text Node
\draw (418.5,177.67) node [anchor=north west][inner sep=0.75pt]  [font=\footnotesize] [align=left] {$\displaystyle \langle v_{9} ,v_{10} \rangle $};
% Text Node
\draw (123.5,148.01) node [anchor=north west][inner sep=0.75pt]  [font=\footnotesize] [align=left] {$\displaystyle \langle v_{3} ,v_{4} ,v_{5} \rangle $};
% Text Node
\draw (122.83,119.01) node [anchor=north west][inner sep=0.75pt]  [font=\footnotesize] [align=left] {$\displaystyle \langle v_{3} ,v_{4} ,v_{5} ,v_{6} \rangle $};
% Text Node
\draw (400.83,147.34) node [anchor=north west][inner sep=0.75pt]  [font=\footnotesize] [align=left] {$\displaystyle \langle v_{8} ,v_{9} ,v_{10} \rangle $};
% Text Node
\draw (381.5,117.34) node [anchor=north west][inner sep=0.75pt]  [font=\footnotesize] [align=left] {$\displaystyle \langle v_{7} ,v_{8} ,v_{9} ,v_{10} \rangle $};
% Text Node
\draw (223,178.67) node [anchor=north west][inner sep=0.75pt]  [font=\footnotesize] [align=left] {$\displaystyle \langle u_{1} ,u_{2} \rangle $};
% Text Node
\draw (337,178.67) node [anchor=north west][inner sep=0.75pt]  [font=\footnotesize] [align=left] {$\displaystyle \langle u_{2} ,u_{3} \rangle $};
% Text Node
\draw (249,147.67) node [anchor=north west][inner sep=0.75pt]  [font=\footnotesize] [align=left] {$\displaystyle \langle u_{1} ,u_{2} ,u_{3} \rangle $};

\end{tikzpicture}
    \caption{Braid diagram and flags for Example \ref{ex: 5,10}. Here $\langle u_1\rangle =\langle v_3,v_4\rangle \cap \langle v_7,v_8,v_9,v_{10}\rangle$,
    $\langle u_2\rangle =\langle v_3,v_4,v_5\rangle \cap \langle v_8,v_9,v_{10}\rangle$ and 
    $\langle u_3\rangle =\langle v_3,v_4,v_5,v_6\rangle \cap \langle v_9,v_{10}\rangle$.}
    \label{fig: splice braid}
\end{figure}
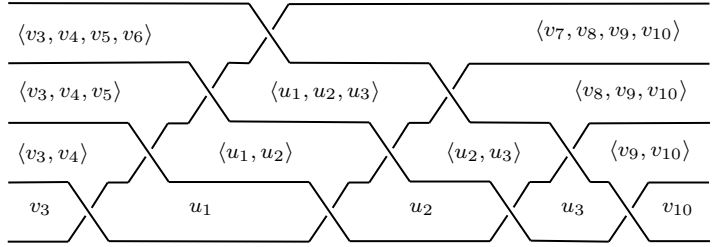

Finally, we can compare the cluster structures on braid varieties. The cluster structure on $\Pi_{k,n}^{\circ,1}$ is obtained from  \eqref{eq: scott} by removing the frozen variables $\Delta_{I(b,k)}$ from $Q_V$. 

\begin{theorem}
The map $\overline{\Phi}_a:V\mapsto (V_1,\overline{V_2})$,
$$
V_1=(v_a,\ldots,v_n),\quad \overline{V_2}=\left(v_1,\ldots,v_a,u_1,\ldots,u_{k-2},\frac{v_n}{\Delta_{I(a,k-1)}}\right)
$$
defines a quasi-cluster isomorphism
between $U_a^1=U_a\cap \Pi_{k,n}^{\circ,1}$ and $\Pi_{k,n-a+1}^{\circ,1}\times \Pi_{k,a+k-1}^{\circ,1}$.
\end{theorem}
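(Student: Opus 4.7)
The plan is to deduce this theorem from the preceding one by restricting $\Phi_a$ to the slices $\Pi^{\circ,1}$ and incorporating the rescaling $v_n \mapsto v_n/\Delta_{I(a,k-1)}$ on the last column of $V_2$.

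First, I would verify that $\overline{\Phi}_a$ sends $U_a^1$ into the target product. The slice conditions defining $\Pi^{\circ,1}_{k,N}$ involve only the $N-k$ non-wraparound cyclically consecutive minors. For $V_1=(v_a,\ldots,v_n)$ these pull back to a subset of the slice conditions on $V$, which hold by $V\in\Pi_{k,n}^{\circ,1}$. For $\overline{V_2}$, the slice conditions range over $b=1,\ldots,a-1$ and therefore use only the columns $v_1,\ldots,v_a,u_1,\ldots,u_{k-2}$, so the rescaling of $v_n$ does not affect them; by Lemma \ref{lem: V1 V2 positroids}(1,2) each such minor reduces to a cyclic minor of $V$ and hence equals $1$.

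Second, I would compare the cluster structures using Lemmas \ref{lem: new minors} and \ref{lem: u wedge}. On $\Pi^{\circ,1}_{k,n}$ the surviving Scott frozens are the wraparound cyclic minors $\Delta_{I(1,i)}$ (for $i=1,\ldots,k-1$) and the empty-rectangle minor $\Delta_{n-k+1,\ldots,n}$; further freezing the $\Delta_{I(a,i)}$ produces $U_a^1$. On the target, the frozens of $V_1$ and $\overline{V_2}$ are all Laurent monomials in the set $\{\Delta_{I(a,i)}(V),\Delta_{I(1,i)}(V),\Delta_{n-k+1,\ldots,n}(V)\}$. The rescaling $v_n\mapsto v_n/\Delta_{I(a,k-1)}$ is designed precisely so that the empty-rectangle frozen $v_a\wedge u_1\wedge\cdots\wedge u_{k-2}\wedge v_n$ of $\overline{V_2}$, which by Lemma \ref{lem: u wedge}(a) would otherwise equal $\Delta_{I(a,k-1)}(V)$, becomes $1$. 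This removes a redundancy and makes the groups of Laurent monomials in frozens coincide on both sides.

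Third, I would check that the quasi-equivalence identifications of mutable cluster variables and exchange ratios established in the preceding theorem survive the passage to slices and the extra rescaling. The mutable cluster variables still coincide up to monomials in $\Delta_{I(a,i)}$ by Lemma \ref{lem: new minors}, and the exchange-ratio identities in cases (a)--(d) of the preceding proof remain unchanged because each factor of $\Delta_{I(a,k-1)}$ cancels in the ratios. By the main result of \cite{F}, verification at the rectangles seed suffices, and the desired quasi-cluster isomorphism follows. The main obstacle is the careful accounting of frozens---in particular verifying that the rescaling correctly normalizes the empty-rectangle frozen of $\overline{V_2}$ and that the Laurent monomial lattices match---after which the remainder is a routine application of the preceding theorem.
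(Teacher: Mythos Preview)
The paper does not supply a proof of this final theorem; it is stated immediately before the bibliography as an evident consequence of the preceding quasi-isomorphism theorem together with the braid-variety discussion. Your proposal is correct and is exactly the argument one would expect the authors intended: restrict $\Phi_a$ to the slices where the bottom-row frozens $\Delta_{I(b,k)}$ are set to $1$, and observe that the rescaling of the last column of $V_2$ normalizes the one remaining mismatch.

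A couple of small sharpenings would make your write-up cleaner. First, the reason the exchange-ratio check survives the rescaling is simply that dividing the last column of $V_2$ by $\Delta_{I(a,k-1)}$ multiplies \emph{every} minor $\Delta_{I(b,i)}(V_2)$ with $i<k$ by the single scalar $1/\Delta_{I(a,k-1)}$; since each $\widehat{y}$ is homogeneous of total degree zero in the cluster variables, these common factors cancel automatically. This is a slightly more robust statement than saying ``each factor of $\Delta_{I(a,k-1)}$ cancels,'' and it covers all four cases (a)--(d) uniformly. Second, you should note explicitly that the bottom-row frozens $\Delta_{I(c,k)}(\overline{V_2})$ for $1\le c\le a-1$ never involve the rescaled last column (since $c+k-1\le a+k-2$), so by cases (1),(2) of Lemma~\ref{lem: V1 V2 positroids} they equal $\Delta_{c,\ldots,c+k-1}(V)=1$; together with the corresponding observation for $V_1$ this shows the slice conditions on $V$ biject with those on the pair $(V_1,\overline{V_2})$, giving the isomorphism of underlying varieties and not merely the quasi-equivalence of seeds.
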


\bibliographystyle{amsplain}

\end{document}